\documentclass[12pt,a4paper]{article}

\usepackage[margin=29mm]{geometry}
\usepackage{amsmath,amssymb,amsthm,mathtools}
\usepackage{enumitem}
\usepackage{microtype}
\usepackage{xcolor}
\usepackage{hyperref}
\usepackage[nameinlink,capitalize,noabbrev]{cleveref}
\usepackage{tikz}
\usetikzlibrary{arrows.meta,decorations.pathreplacing}

\hypersetup{
	colorlinks=true,
	linkcolor=blue,
	filecolor=blue,
	urlcolor=blue,
	citecolor=cyan,
}

\allowdisplaybreaks
\numberwithin{equation}{section}
\setlist[enumerate]{leftmargin=2.2em,itemsep=2pt,topsep=3pt}

\newtheorem{theorem}{Theorem}[section]
\newtheorem{lemma}[theorem]{Lemma}

\newtheorem{corollary}[theorem]{Corollary}
\newtheorem{claim}[theorem]{Claim}
\theoremstyle{definition}

\theoremstyle{remark}

\title{Near-optimal Tur\'an densities of $r$-graphs on $r+1$ vertices}
\author{Jiabao Yang\thanks{School of Mathematics, Nanjing University, Nanjing 210093, China (Email: jbyang1215@nju.edu.cn)}~,
Xiutao Zhu\thanks{Corresponding author. School of Mathematics, Nanjing University of Aeronautics and Astronautics, Nanjing 211106, China (Email: zhuxiutao@nuaa.edu.cn)}
}
\date{}

\begin{document}
\maketitle
\vspace{-1.2em}

\begin{abstract}
Let $\pi(H)$ be the Tur\'an density of an r-uniform hypergraph $H$ and let $H_k^r$ denote the $r$-uniform hypergraph on $r+1$ vertices with exactly $k$ edges, where $1\le k\le r+1$.  
Sidorenko~(JCT-B, 2024) proved that
$\pi(H_3^r)\ge (1.7215-o(1))r^{-2}$ as $r\to\infty$ and 
$\pi(H_k^r)\ge (C_k+o(1))r^{-(1+1/(k-2))}$ for fixed $k$ as $r\to\infty$.  
Clemen~later improved the first bound to $\pi(H_3^r)\ge cr^{-2}\sqrt{\log r}$ for some constant $c>0$.

In this article, we prove the following results.
\begin{itemize}
\item For any fixed $\varepsilon>0$, there is a constant $c_\varepsilon>0$ such that
$$\pi(H_3^r)\ge \frac{c_\varepsilon}{r(\log r)^{2+\varepsilon}}.$$
Together with the known upper bound $\pi(H_3^r)\le1/r$, this implies
$\pi(H_3^r)=r^{-1+o(1)}$.

\item For every $3\le k\le r+1$, let
$s=\min\{k-2,r-k+2\}$.  Then
\begin{equation*}
 0\le \frac{k-2}{r}-\pi(H_k^r)
 \le \frac{128}{r}\left(\sqrt{s\log\frac{er}{s}}+\log\frac{er}{s}\right).
\end{equation*}
This estimate yields several asymptotically sharp results for $\pi(H_k^r)$.   For
example, $\pi(H_k^r)=(1+o(1))(k-2)/r$
when $\log(er/(k))=o(k)$.

\end{itemize}
\end{abstract}

\medskip
\noindent\textbf{Keywords.} Tur\'an density; Tur\'an-type problem; hypergraph; circular construction

\medskip
\noindent\textbf{AMS classification:} 05C35, 05C65

\section{Introduction and main results}

Let $H$ be an $r$-uniform hypergraph, or simply an $r$-graph.
An $r$-graph $G$ is called {\em $H$-free} if $G$ does not contain a copy of the $r$-graph $H$ as a subgraph.
The {\em Tur\'{a}n number} of the $r$-graph $H$, denoted by $\mathrm{ex}(n,H)$, is the maximum number of edges in an $H$-free $r$-graph on $n$ vertices.

Over the past century, Tur\'an problems have awalys been a central topic in extremal combinatorics. 
Compared to the exact value of $\mathrm{ex}(n,H)$,  usually we are more interested in the asymptotic behavior of $\mathrm{ex}(n,H)$, which is called the Tur\'an density of $H$, denoted by
$\pi(H)=\lim_{n\to\infty} \mathrm{ex}(n,H)/\binom {n}{r}.$

For $2$-graphs, Tur\'an densities are well understood. The classical Erd\H{o}s-Stone-Simonovits Theorem \cite{Er, stone} tells us $\pi(H)=1-\frac{1}{\chi(H)-1}$. However,  determining them for $r$-graphs with $r\ge 3$ is difficult in general.
the Tur\'an densities $\pi(K_4^{(3)-})$ and $\pi(K_4^{(3)})$  remain unknown, where $K_4^{(3)}$ is the complete $3$-graph on four vertices and $K_4^{(3)-}$ is obtained from it by deleting one edge.
Determining $\pi(K_4^{(3)})$ remains one of the main open problems in extremal combinatorics.
For further background and related results, we refer the reader to~\cite{deCaen1994,Furedi1991,Keevash2011,Sidorenko1995}.

In this paper, we mainly focus on a family of Tur\'an problem. For $3\le k\le r+1$, let $H_k^r$ be the $r$-graph on $r+1$ vertices with $k$ edges. Clearly, $H_4^3=K_4^{(3)}$, $H_3^3=K_4^{(3)-}$. And we will study the Tur\'an density $\pi(H_k^r)$ in this paper. Actually, this problem  is also a special case of Brown-Erd\H{o}s-S\'os problem. Brown, Erd\H{o}s and S\'os \cite{BES} initiated the study of $f^{(r)}(n,v,e)$ which is the smallest $f$ such that every $r$-graph with $n$ vertices and $f$ edges has a subset of $v$ vertices with at least $e$ edges. The literature on this topic is extensive, see \cite{Alon,Postle,Glock,Glock2,Wang}. If we let
$$\mathcal{H}_e^v=\{H:\text{$H$ is a $r$-graph with $v$ vertices and $e$ edges}\}$$
then $\mathrm{ex}(n,\mathcal{H}_e^v)=f^{(r)}(n,v,e)-1$. Since every $r$-subset of an $(r+1)$-set omits exactly one vertex,  $H_k^r$ is unique up to isomorphism and $\mathcal{H}_k^{r+1}=\{H_k^r\}$. So studying the Tur\'an problem $\mathrm{ex}(n,H_k^r)$ and $\pi(H_k^r)$ is equivalent to Brown-Erd\H{o}s-S\'os problem $f^{(r)}(n,r+1,k)$.

There are many rich results about the problem $\pi(H_k^r)$. Since $H_3^2$ is a triangle, Mantel's theorem~\cite{Mantel1907} shows that $\pi(H_3^2)=1/2$. Frankl and F\"uredi~\cite{FranklFuredi1984} conjectured that $\pi(H_3^3)=2/7$, and Gunderson and Semeraro~\cite{GundersonSemeraro2017} proved that $\pi(H_3^4)=1/4$.
Gunderson and Semeraro~\cite{GundersonSemeraro2025} also proved $\pi(H_3^6)\ge 9/64$, $\pi(H_3^7)\ge 35/2^{11}$, and $\pi(H_3^8)\ge 315/2^{14}$.
By considering the link graph of the vertex with maximum degree, one can easily get\footnote{For the detailed proof, see the introduction in \cite{Sidorenko2024} } 
\begin{equation}\label{eq:monotonicity}
\pi(H_k^{r-1})\ge \pi(H_k^r).
\end{equation}
Using this inequality, we can also get $\pi(H_3^5)\ge\pi(H_3^6)\ge 9/64$. For the general $H_3^r$, earlier bounds showed $2^{1-r}\le \pi(H_3^r)\le 1/r$,
see Frankl and F\"uredi~\cite{FranklFuredi1984},  Markstr\"om and Thomassen~\cite{MarkstromThomassen2021}, respectively. Recently,
Sidorenko improved the general lower bound to $r^{-2}$.
\begin{theorem}\label{end:lem:positive}(Sidorenko~\cite{Sidorenko2024})
We have $\pi(H_3^r)\ge 1/r^{2}$ for every $r$ and
$\pi(H_3^r)\ge (1.7215-o(1))/r^{2}$ as $r\to\infty$.   
\end{theorem}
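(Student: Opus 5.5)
The plan is to prove the bound by an explicit construction combined with a short counting argument. The guiding observation is this. If an $(r+1)$-set $S$ spans three edges $S\setminus\{a\}$, $S\setminus\{b\}$, $S\setminus\{c\}$, then each of these edges contains two of the three special vertices $a,b,c$. So it suffices to build a rule for edges under which the "missing vertices" of any two edges in a common $(r+1)$-set are forced to look alike, while no single edge may contain two such look-alike vertices.

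\textbf{Construction.} Fix an integer $m$ and give every vertex of an $n$-vertex set a label $\lambda(v)\in\mathbb{Z}_m$, with the labels as balanced as possible. Declare an $r$-set $e$ to be an edge iff
\[
\sum_{v\in e}\lambda(v)\equiv 0 \pmod m
\quad\text{and}\quad
\text{the labels on } e \text{ are pairwise distinct.}
\]

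\textbf{Freeness.} Let $S$ be an $(r+1)$-set with $\sigma(S)=\sum_{v\in S}\lambda(v)$. Then $S\setminus\{a\}$ satisfies the sum condition iff $\lambda(a)\equiv\sigma(S)$. Suppose three edges $S\setminus\{a\}$, $S\setminus\{b\}$, $S\setminus\{c\}$ are present. Then $\lambda(a)=\lambda(b)=\lambda(c)$, so the edge $S\setminus\{a\}$ contains $b$ and $c$ with equal labels. This contradicts distinctness, so the $r$-graph is $H_3^r$-free.

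\textbf{Density.} As $n\to\infty$, the edge density tends to
\[
\frac{1}{m}\cdot\frac{m(m-1)\cdots(m-r+1)}{m^r}.
\]
Here the second factor is the probability that $r$ uniform labels are distinct. The first factor is exact for distinct labels up to $o(1)$: fix the first $r-1$ distinct labels; the last label is then determined, and for large $m$ it is unlikely to collide. Taking $m\approx r^2/2$ gives roughly $e^{-1}\cdot 2/r^2\approx 0.74/r^2$. This already shows $\pi(H_3^r)=\Omega(r^{-2})$, but it falls short of the constant $1$.

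\textbf{Main obstacle: reaching the constant $1$.} The step I expect to be hardest is pushing the constant up to $1$, uniformly for every $r$, and then to $1.7215$ asymptotically. My first attempt will be to add edges recursively inside the classes. Within each label class, which is an independent block of density about $1/m$ of the vertices, I would place a scaled copy of the same construction. The density then satisfies a recursion of the form $d\ge d_0+m^{1-r}d$. That recursion is negligible, so this alone will not suffice. The more promising route is to replace the rigid distinctness condition by a weaker one. The proof only needs that no edge contains two vertices whose label equals $\sigma(S)$ for some $S\supset e$; that label is $\lambda(v)$ for the added vertex $v$. So I would try the following weaker rule: take labels in a cyclic group together with real "positions" on a circle, in the spirit of a circular construction, and let edges be those $r$-sets whose label multiset satisfies the sum condition together with an ordering condition along the circle that forbids two equal labels in one edge. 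I would then optimise the label distribution, which need not be uniform, and $m$ as functions of $r$. For small $r$ I would check the bound $1/r^2$ directly against the known values, for instance $\pi(H_3^2)=1/2$ and $\pi(H_3^4)=1/4$, together with the monotonicity \eqref{eq:monotonicity}. The asymptotic constant $1.7215$ should come out of a one-variable optimisation of the limiting density expression in the scaling $m=\Theta(r^2)$.
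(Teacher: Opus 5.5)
Your modular construction is correctly shown to be $H_3^r$-free, but the proposal does not prove either assertion. That construction has limiting density $\frac1m\cdot\frac{(m)_r}{m^r}\approx\frac1m e^{-r^2/(2m)}$. This is maximised at $m\approx r^2/2$ and gives only $(2/e+o(1))\,r^{-2}\approx0.736\,r^{-2}$, which is below $1/r^2$ for all large $r$, as you note yourself. Everything after that is a list of things you would try, with no edge rule, no freeness argument and no density computation. The fallback of checking small $r$ against known values does not work either. \eqref{eq:monotonicity} transfers lower bounds only from uniformity $r$ down to $r-1$, so finitely many known values cannot cover all $r$; the values quoted in the introduction do not even give $1/49$ for $r=7$. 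In any case you have no explicit threshold beyond which an asymptotic construction takes over.

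The missing idea for the bound $1/r^2$ is the circular construction, whose admissible window adapts to the edge; it needs no labels at all. Give each vertex an independent uniform point $x_v\in\mathbb{R}/\mathbb{Z}$, and write $\|\cdot\|$ for circular distance. Let an $r$-set $e$ be an edge iff $\sum_{v\in e}x_v \bmod 1\in[0,\delta(e)]$, where $\delta(e)=\min\{\|x_u-x_w\|: u,w\in e,\ u\ne w\}$. Suppose $S\setminus\{a\}$, $S\setminus\{b\}$, $S\setminus\{c\}$ were all edges. Put $\sigma=\sum_{v\in S}x_v$ and $d_v=(\sigma-x_v)\bmod 1$, so that $d_v\le\delta(S\setminus\{v\})$. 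If $d_a$ is the largest of the three (the points are almost surely distinct), then $\|x_b-x_c\|\le|d_b-d_c|<d_a\le\delta(S\setminus\{a\})\le\|x_b-x_c\|$, a contradiction. For the density, rotating the points of $e$ by $\theta$ shifts their sum by $r\theta$ and does not change $\delta(e)$. So $e$ is an edge with probability exactly $\mathbb{E}\,\delta(e)=\mathbb{E}(\xi_{r,2})$, and Lemma~\ref{lem:renyi1953} with $m=1$ gives $\mathbb{E}(\xi_{r,2})=\frac1r\mathbb{E}(\min_i X_i)=r^{-2}$ for every $r\ge2$. Hence $\mathrm{ex}(n,H_3^r)\ge\binom nr/r^2$ for all $n$. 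This is Theorem~\ref{thm:sidorenko} with $k=3$; the paper does not reprove the statement but cites it from Sidorenko.

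For the constant $1.7215$ a genuinely better construction is needed, and your proposal does not supply one. The one-variable optimisation of the modular scheme peaks at $2/e$. Even its natural repair does not reach the target: allow each label twice in an edge, but require equal-label pairs to cross a fixed bipartition of their class. This stays $H_3^r$-free because a triangle cannot cross a bipartition, yet it only reaches $\max_{x>0}x^{-1}e^{-1/(4x)}=4/e\approx1.47$.
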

Later, 
Clemen~\cite{Clemen2026} improved the  lower bound to $\pi(H_3^r)\ge c r^{-2}\sqrt{\log r}$ for some absolute constant $c>0$, where $\log$ denotes the natural logarithm.
In this paper, we first prove the following result.

\begin{theorem}\label{end:thm:main}
For every fixed $\varepsilon>0$, there is a constant $c_\varepsilon>0$ such that
$$\pi(H_3^r)\ge \frac{c_\varepsilon}{r(\log r)^{2+\varepsilon}}$$
for all sufficiently large $r$.  
Hence $\pi(H_3^r)=r^{-1+o(1)}$.
\end{theorem}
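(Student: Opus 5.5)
The plan is to prove \cref{end:thm:main} by an explicit \emph{circular} blow-up construction. I would then compute its edge density, in the spirit of Sidorenko's construction behind \cref{end:lem:positive}, but arranged so that an edge has probability about $1/r$ up to polylogarithmic losses, rather than $1/r^2$. Concretely, place the $n$ vertices at independent uniform points $x_v$ of the circle $\mathbb{R}/\mathbb{Z}$, and give each vertex an independent random label $\ell_v$ from a finite alphabet. An $r$-set $e$ splits the circle into $r$ gaps $g_1,\dots,g_r$ between cyclically consecutive points.

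The edge rule has two parts. First, $e$ has a \emph{unique} distinguished gap, whose length lies in a prescribed window $[A,2A)$, while every other gap is shorter than $A$. Second, the labels of the two endpoints of that gap, together with possibly a bounded number of neighbouring labels, satisfy a fixed compatibility relation. Because removing a vertex $v$ from an $(r+1)$-set $S$ only merges the two gaps adjacent to $v$, the combinatorics of $H_3^r$-freeness become local. If $S\setminus\{v\}$ is an edge, then either $v$ is an endpoint of $S$'s own long gap, or the merged gap at $v$ is the new distinguished gap. The label rule is designed to kill the second kind of configuration: a merged gap is admissible only if its endpoints carry ``incompatible'' labels that cannot occur as a genuine gap in $S$.

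The key steps, in order, are as follows.
\begin{enumerate}
\item Fix the rule precisely and verify that every $(r+1)$-set $S$ has at most two vertices $v$ with $S\setminus\{v\}\in E$.
\item Compute the density, i.e.\ the probability that $r$ uniform points and random labels satisfy the rule.
\item Optimise the parameters.
\end{enumerate}

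For the density, the gaps behave like independent exponentials of mean $1/r$ conditioned on summing to $1$. With $A\approx (\log r)/r$, the event that ``exactly one gap lies in $[A,2A)$ and the rest lie below $A$'' has probability bounded below by a constant. The label constraint costs only a factor of order $1/r$ or better. A single scale $A$, however, will not make the verification in step~1 work, since merges of two moderately long gaps can land in the window.

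I therefore expect to use a multiscale version. Pick a random scale index $j$ with probability $p_j\propto 1/(j(\log j)^{1+\varepsilon})$, a summable weight. The window becomes $[A_j,2A_j)$, and the labels encode $j$, so that different scales cannot interfere. The loss from the weights over the $\approx\log r$ relevant scales is what produces the factor $(\log r)^{2+\varepsilon}$ in the final bound $c_\varepsilon/(r(\log r)^{2+\varepsilon})$. Finally, $\pi(H_3^r)=r^{-1+o(1)}$ follows from the known upper bound $\pi(H_3^r)\le1/r$.

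The main obstacle is step~1. One must show that, among the $r+1$ possible deletions, at most two produce an edge, uniformly over all configurations of $S$. The delicate cases are those where $S$ has several gaps close to the threshold, or where two short adjacent gaps merge into the window. I would first try to handle these by a strict inequality bookkeeping on gap lengths: a merged gap in the window forces both constituent gaps to exceed $A/2$, which the label/scale rule forbids. If that fails, I would fall back on a Sidorenko-style two-colouring of the arcs to rule out the merged-gap edges.
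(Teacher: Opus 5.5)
There is a genuine gap, and it sits in your step~1. The rule you describe is not $H_3^r$-free, and no choice of local label relation or of scales can repair it. Under your rule, whether an $r$-set is an edge depends only on its gap lengths and on the labels at, and boundedly near, its distinguished gap.

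Take any edge $e$ whose distinguished gap $g^*$ has length in $[A,2A)$. The merged lengths $h_{u^-}+h_{u^+}$, $u\in e$, sum to $2$, so at most $2/A$ of them are at least $A$. With your choice $A\approx(\log r)/r$, all but $O(r/\log r)$ vertices $u$ therefore have merged gap below $A$. Now add a vertex $v$ inside a short gap of $e$, away from $g^*$, and let $S=e\cup\{v\}$. Consider any such $u$ that is not adjacent to $v$ and is neither an endpoint of $g^*$ nor one of its boundedly many label-neighbours. Then $S\setminus\{u\}$ still has $g^*$ as its unique gap in $[A,2A)$, all its other gaps are below $A$, and the labels around $g^*$ are the same. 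So $S\setminus\{u\}$ is an edge, and every edge lies in an $(r+1)$-set spanning $\Omega(r)$ edges.

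Your dichotomy (``either $v$ is an endpoint of $S$'s own long gap, or the merged gap at $v$ is the new distinguished gap'') omits this case, which is the generic one. Encoding the scale in the labels does not help, since the argument works scale by scale. The merged-gap case is not local either. Three pairwise distant deletions, each creating a window gap, involve disjoint sets of labels, so a compatibility relation on bounded neighbourhoods cannot exclude all three at once. Also, a merged gap in $[A,2A)$ forces only one of its two constituents to be at least $A/2$, not both.

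The missing idea is a \emph{global} coordinate of the edge label that changes under every single deletion. Then three edges inside one $(r+1)$-set force the three deleted vertices close together, and only a local problem remains. In Sidorenko's construction this role is played by $\arg\prod z_i$. In the paper it is the first label coordinate $\sum_i i\,|B\cap V_i| \bmod q$. Here $V$ is split into $q\approx r/(16\log r)$ equal parts, and only $r$-sets meeting each part in at most $L=\lceil128\log r\rceil$ vertices are used.

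Equal first coordinates force $x,y,z$ into a single part $V_a$. The second coordinate $\sum_i\varphi_{i,|B\cap V_i|}(B\cap V_i)\bmod P$ then rules out the rest. It is built from colourings of $\binom{V_a}{t}$, for $t\le L$, with no monochromatic $H_3^t$ and about $L\log M/\pi(H_3^L)$ colours. Together with Sidorenko's pair-covering blow-up lemma this gives the recursion $\pi(H_3^r)\ge\pi(H_3^{\lceil129\log r\rceil})/(360r\log r)$. Iterating it, with only $O(\log\log\log r)$ steps needed, produces the factor $(\log r)^{2+\varepsilon}$.

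Your attribution of that exponent to summable weights over roughly $\log r$ scales is not backed by any computation. Only your final step, deducing $\pi(H_3^r)=r^{-1+o(1)}$ from $\pi(H_3^r)\le 1/r$, is fine as stated.
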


For $k\ge4$, the general upper bound $\pi(H_k^r)\le (k-2)/r$ is known \cite{MarkstromThomassen2021}.
But , fewer results  for the lower bounds are known. For example, $\pi(H_5^4)\ge 11/16$ was proved in~\cite{deCaenKreherWiseman1988,Giraud1990}, $\pi(H_4^3)\ge 5/9$ and $\pi(H_{r+1}^r)\ge 1-\left(1/2+o(1)\right)\frac{\log r}{r}$ as $r\to\infty$; see~\cite{Sidorenko1995,Sidorenko1997}, and  $\pi(H_4^4)\ge 3/7$  was given in \cite{Sidorenko2024}. Later, Sidorenko  gave a general lower bound of $\pi(H_k^r)$ using the construction  of random points uniformly distributed on the circle: choose $r$ independent points uniformly at random on the circle of circumference one.
For $1\le t\le r$, let $\xi_{r,t}$ be the length of the shortest arc that contains at least $t$ of the points.
For each fixed $m\ge1$, Cressie~\cite{Cressie1977Gap} proved that
$$ \mathbb{E}(\xi_{r,m+1})=\left((m!)^{1/m}\Gamma\left(1+\frac1m\right)+o(1)\right)r^{-1-1/m},$$
where $\Gamma$ denotes the gamma function. Based on this construction, Sidorenko proved the following lower bounds.

\begin{theorem}[Sidorenko~\cite{Sidorenko2024}]\label{thm:sidorenko}
For $3\le k\le r+1$, $\pi(H_k^r)\ge\mathbb{E}(\xi_{r,k-1})$. Moreover, for every fixed $k$, 
$\mathbb{E}(\xi_{r,k-1})=(C_k+o(1))r^{-1-\frac{1}{k-2}}$ for some constant $C_k$ as $r\to \infty$.
\end{theorem}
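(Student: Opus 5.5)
The plan is to prove the bound $\pi(H_k^r)\ge\mathbb{E}(\xi_{r,k-1})$ with a random geometric construction on the circle of circumference one, and then to get the asymptotic statement from Cressie's formula. One part of the plan is not settled: the precise edge rule, checked in the third step below. The one rule I tested fails, and I have not found the correct variant.

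\textbf{Step 1 (construction).} Fix a marked point $0$ on the circle. Give each vertex $v\in[n]$ an independent uniform position $x_v$; almost surely all positions are distinct and no ties occur. For an $r$-set $S$, let $A_S$ be the shortest arc containing $k-1$ of the points $\{x_v:v\in S\}$. Generically $A_S$ is unique and spans $k-1$ cyclically consecutive points of $S$. The candidate rule is to declare $S$ an edge of $G_n$ when $0\in A_S$.

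\textbf{Step 2 (density).} Condition on the positions of $S$. By rotation invariance, the probability that $0\in A_S$ equals the length of $A_S$. Hence every $r$-set is an edge with probability exactly $\mathbb{E}(\xi_{r,k-1})$. By linearity of expectation, some choice of positions gives an $n$-vertex graph with at least $\mathbb{E}(\xi_{r,k-1})\binom nr$ edges. So the bound follows once every such graph is shown to be $H_k^r$-free.

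\textbf{Step 3 (freeness, the main obstacle).} I expect this to be the hard step. Fix an $(r+1)$-set $T$ and list its points cyclically. The set $T\setminus\{v\}$ can be an edge only if $A_{T\setminus\{v\}}$ is one of two kinds of arc:
\begin{itemize}
\item a window of $k-1$ consecutive points of $T$ that avoids $v$;
\item a window of $k$ consecutive points of $T$ with $v$ strictly inside, which becomes a $(k-1)$-window once $v$ is deleted.
\end{itemize}
Let $W$ be the shortest $(k-1)$-window of $T$.
\begin{itemize}
\item \emph{Deleting a point outside $W$.} If $v\notin W$, then $A_{T\setminus\{v\}}=W$. Any $k$-window with $v$ inside contains a $(k-1)$-window of $T$, so it is longer than $W$. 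There are $r-k+2$ such $v$, and they all give the same arc.
\item \emph{Deleting a point inside $W$.} At most the $k-1$ points of $W$ can produce other arcs.
\end{itemize}
This case analysis shows the candidate rule does not work as stated. If $0\in W$, the $r-k+2$ vertices outside $W$ already give $r-k+2$ edges. Together with possible edges from deleting points of $W$, this can exceed $k-1$, for instance when $k$ is small.

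So the rule must be adjusted: either choose $A_S$ differently, or break the symmetry around $0$, for example by orienting the arc or using a one-sided window. The aim is that among the $r+1$ sets $T\setminus\{v\}$ at most $k-1$ are edges, while keeping the property that an edge occurs with probability equal to the length of a shortest $(k-1)$-arc. I would look for an assignment in which each arc arising from $T$ can "own" the point $0$ for only one group of deleted vertices, and where that group has size at most $k-1$. This would come from a charging argument over the consecutive windows of $T$.

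\textbf{Step 4 (asymptotics).} For fixed $k$, apply Cressie's formula with $m=k-2$, so that $\xi_{r,k-1}=\xi_{r,m+1}$. This gives $\mathbb{E}(\xi_{r,k-1})=(C_k+o(1))r^{-1-1/(k-2)}$ with $C_k=((k-2)!)^{1/(k-2)}\Gamma\!\left(1+\tfrac1{k-2}\right)$.
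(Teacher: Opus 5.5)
Your proposal has a genuine gap. Step~3 is never completed, so you exhibit no $H_k^r$-free construction, and the inequality $\pi(H_k^r)\ge\mathbb{E}(\xi_{r,k-1})$ is not proved. The failure of the rule ``$S$ is an edge iff $0\in A_S$'' is structural, not a tie-breaking issue. For each of the $r-k+2$ vertices $v\notin W$, the shortest $(k-1)$-window of $T\setminus\{v\}$ is $W$ itself. So the test for all of them is the single test $0\in W$. Any variant in which the tested arc is located by the shortest window (oriented, one-sided, etc.) inherits the same defect.

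The missing idea is Sidorenko's rule, which the paper recalls at the start of Section~3 as $0\le\arg\prod z_i\le\Delta(A)$. It decouples the location of the tested arc from the window and ties it to the \emph{sum} of all positions. With $x_v\in\mathbb{R}/\mathbb{Z}$, let $S$ be an edge iff $\sigma(S):=\sum_{v\in S}x_v \bmod 1$ lies in $[0,\Delta(S)]$, where $\Delta(S)$ is the length of a shortest arc containing $k-1$ points of $S$. Deleting $v$ shifts $\sigma$ by $-x_v$. So the edge condition for $T\setminus\{v\}$ becomes a condition on the position of the deleted vertex itself, which is exactly what can distinguish the deleted vertices.

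With this rule your averaging framework goes through.

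\emph{Density.} Rotating the $r$ points by an independent uniform $\theta$ preserves their joint law and $\Delta(S)$, and shifts $\sigma(S)$ by $r\theta$, which is uniform mod $1$. So, conditionally on the original points, the rotated configuration is an edge with probability exactly $\Delta(S)$. Since it has the same law as the original, each $r$-set is an edge with probability $\mathbb{E}(\xi_{r,k-1})$.

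\emph{Freeness.} This holds for every configuration with distinct positions. Put $\sigma=\sigma(T)$ and $d_v=(\sigma-x_v)\bmod 1\in[0,1)$, so that $T\setminus\{v\}$ is an edge iff $d_v\le\Delta(T\setminus\{v\})$. Suppose $v_1,\dots,v_k$ all qualify. Choose $j$ with $d_{v_j}$ maximal, and let $d^*=\max_{i\ne j}d_{v_i}$; then $d^*<d_{v_j}$, since the $d_v$ are distinct. The $k-1$ points $x_{v_i}$ with $i\ne j$ belong to $T\setminus\{v_j\}$ and lie in the arc of length $d^*$ ending at $\sigma$. Hence $\Delta(T\setminus\{v_j\})\le d^*<d_{v_j}\le\Delta(T\setminus\{v_j\})$, a contradiction.

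Your linearity-of-expectation step then gives $\mathrm{ex}(n,H_k^r)\ge\mathbb{E}(\xi_{r,k-1})\binom nr$ for every $n$. Your Step~4 is fine: Cressie's formula with $m=k-2$ gives $C_k=((k-2)!)^{1/(k-2)}\Gamma(1+\frac{1}{k-2})$.
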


One may note, this lower bound is still very far from the previous upper bound  $(k-2)/r$.
However, when combining this construction and R\'enyi exponential representation, we still get better results when $k$ grows with $r$. That is 

\begin{theorem}\label{thm:main}
For all integers $r\ge2$ and $3\le k\le r+1$, 
\begin{equation*}
 0\le \frac{k-2}{r}-\pi(H_k^r)\le \frac{128}{r}\left(\sqrt{s \log\frac{er}{s}}+\log\frac{er}{s}\right),
\end{equation*}
where $s=\min\{k-2,r-k+2\}$.
\end{theorem}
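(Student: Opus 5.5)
The lower bound $0\le \frac{k-2}{r}-\pi(H_k^r)$ is just the known upper bound $\pi(H_k^r)\le (k-2)/r$ of \cite{MarkstromThomassen2021}. The content is therefore the lower bound on $\pi(H_k^r)$. I will get it from \cref{thm:sidorenko}, $\pi(H_k^r)\ge \mathbb{E}(\xi_{r,k-1})$, so it suffices to show
\[
\mathbb{E}(\xi_{r,k-1})\ \ge\ \frac{k-2}{r}-\frac{128}{r}\Bigl(\sqrt{s\log\tfrac{er}{s}}+\log\tfrac{er}{s}\Bigr).
\]
By rotation invariance I represent $r$ uniform points on the circle by their spacings. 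R\'enyi's representation gives $(D_1,\dots,D_r)=(E_1,\dots,E_r)/S$, with $E_i$ i.i.d.\ standard exponentials and $S=\sum_i E_i$. An arc containing $t=k-1$ points contains $t-1=k-2$ consecutive spacings (cyclically). Hence
\[
\xi_{r,k-1}=\min_{1\le j\le r}\frac{E_j+\dots+E_{j+k-3}}{S},
\]
with indices taken mod $r$. Write $m=k-2$, so $1\le m\le r-1$ and $s=\min\{m,r-m\}$.

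\textbf{Case $m\le r/2$ (so $s=m$).} Each window sum $W_j$ is $\mathrm{Gamma}(m,1)$. The lower-tail Chernoff bound gives
\[
\Pr\bigl(W_j\le m-x\bigr)\le \exp\bigl(-x^2/(2m)\bigr).
\]
A union bound over the $r$ windows, with $x=\sqrt{2m\log(r/\delta)}$, shows that $\min_j W_j\ge m-x$ except with probability $\delta$. Likewise $S\le r+y$ except with probability $e^{-y^2/(4r)}$ for $y\le r$, by a Bernstein/Gamma upper tail.

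Rather than working with the ratio directly, I will use $\mathbb{E}[\min_j W_j/S]\ge \mathbb{E}[\min_j W_j]/\,\cdot$ via the independence trick: $\mathbf D$ is independent of $S$, so
\[
\mathbb{E}\bigl[\min_j W_j\bigr]=\mathbb{E}[S]\,\mathbb{E}[\xi]=r\,\mathbb{E}[\xi].
\]
This is the key simplification, and it means only $\mathbb{E}[\min_j W_j]\ge m-C(\sqrt{m\log(er/m)}+\log(er/m))$ is needed. I will prove it by integrating the tail bound
\[
\Pr\bigl(\min_j W_j\le m-x\bigr)\le \min\bigl\{1,\ r\,e^{-x^2/(2m)}\bigr\}.
\]
This naively gives $\sqrt{2m\log r}$, which is a factor $\sqrt{\log r}$ too large compared to the target $\sqrt{m\log(er/m)}$.

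The fix, and the main obstacle, is to avoid a union bound over $r$ heavily correlated windows. I will first split the circle into about $r/m$ blocks of length $m$. Each window is then a union of a suffix of one block and a prefix of the next. The minimum over windows starting in a block is controlled by maximal-inequality (Doob/Kolmogorov-type, or Ottaviani/L\'evy) bounds on the partial sums of the centered exponentials within that block. These deviations have sub-exponential tails at scale $\sqrt{m}$ (plus a linear term), so the union is only over $r/m$ blocks. This yields a deviation
\[
O\Bigl(\sqrt{m\log\tfrac{er}{m}}+\log\tfrac{er}{m}\Bigr),
\]
where the additive $\log$ term comes from the exponential (Bernstein) regime. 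Careful constants should give $128$.

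\textbf{Case $m>r/2$ (so $s=r-m$).} A window of $m$ spacings is the complement of $r-m=s$ consecutive spacings, so
\[
\min_j W_j=S-\max_j V_j,
\]
where $V_j$ is a window sum of length $s$. By the independence trick,
\[
r\,\mathbb{E}[\xi]=r-\mathbb{E}\bigl[\max_j V_j\bigr].
\]
It then suffices to show $\mathbb{E}[\max_j V_j]\le s+C(\sqrt{s\log(er/s)}+\log(er/s))$. This follows from the same block argument with blocks of length $s$, now using upper tails of exponential partial sums, which are sub-exponential and again produce the $\log$ term. Dividing by $r$ then gives the stated bound.
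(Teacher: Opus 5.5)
Your proposal is correct and follows the paper's proof essentially step by step. The shared chain is: Sidorenko's bound $\pi(H_k^r)\ge\mathbb{E}(\xi_{r,k-1})$; R\'enyi's representation with the independence of $S_r$, giving $r\,\mathbb{E}(\xi_{r,m+1})=\mathbb{E}(M_{r,m})$; grouping the $r$ cyclic windows into about $r/\ell$ blocks, whose window sums are differences of partial sums of at most $2\ell$ centered exponentials; a maximal inequality, a union bound over blocks and tail integration; and the complement identity $M_{r,m}=S_r-\max_i Y_i^{(r-m)}$ when $m>r/2$.

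The one point you leave open is the constant $128$. The paper obtains $\mathbb{E}\max_i|W_i^{(\ell)}|\le 10\sqrt{\ell L_\ell}+96L_\ell$ from the Doob-type maximal Bernstein inequality $\mathbb{P}(\max_{j\le n}|T_j|\ge u)\le2\exp[-\frac14\min\{u^2/n,u\}]$. That is the tool you need among those you list: a second-moment Kolmogorov bound would only give $O(\sqrt r)$ after the union bound over blocks.
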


In some sense, the error in the right hand would exceed $(k-2)/r$. For example when $k=3$, the error is much larger than $(k-2)/r$. However, when $k=k(r)$ grows with $r$, sometimes we can get much better lower bound using our result.   As an application, we obtain the following three different asymptotic forms.

\begin{corollary}\label{cor:applications}
Let $k=k(r)$ be an integer with $3\le k\le r+1$.
\begin{enumerate}[label=\textup{(\roman*)}]
\item If $k/\log r\to c\in(0,\infty)$, then
\begin{equation*}
1-128\left(c^{-1/2}+c^{-1}\right)+o(1)
\le \frac{r\pi(H_k^r)}{k-2}\le1.
\end{equation*}
\item If $\log(er/k)=o(k)$, then
\begin{equation*}
\pi(H_k^r)=(1+o(1))\frac{k-2}{r}.
\end{equation*}
\item If $k/r\to\alpha\in(0,1)$ and $\beta=\min\{\alpha,1-\alpha\}$, then
\begin{equation*}
0\le\frac{k-2}{r}-\pi(H_k^r)
\le\frac{128\sqrt{\beta\log(e/\beta)}+o(1)}{\sqrt r}.
\end{equation*}
\end{enumerate}
\end{corollary}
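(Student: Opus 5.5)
The corollary follows directly from \cref{thm:main}. In each regime I will identify $s=\min\{k-2,r-k+2\}$, bound the right-hand side $\frac{128}{r}\bigl(\sqrt{s\log\frac{er}{s}}+\log\frac{er}{s}\bigr)$, and divide by $(k-2)/r$ where appropriate. The upper bounds $r\pi(H_k^r)/(k-2)\le 1$ and $\frac{k-2}{r}-\pi(H_k^r)\ge 0$ are the left inequality of \cref{thm:main}, which is just the known bound $\pi(H_k^r)\le (k-2)/r$. So only the lower bounds need work.

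\textbf{Part (i).} Suppose $k/\log r\to c$. Then $k=o(r)$, so for large $r$ we have $s=k-2=(c+o(1))\log r$. Since $\log(er/s)=\log r-\log s+1=(1+o(1))\log r$, we get $s\log(er/s)=(c+o(1))\log^2 r$. Dividing the error term by $(k-2)/r$ gives
\[
128\left(\frac{\sqrt{s\log(er/s)}}{s}+\frac{\log(er/s)}{s}\right)=128\left(\frac{(\sqrt c+o(1))\log r}{(c+o(1))\log r}+\frac{(1+o(1))\log r}{(c+o(1))\log r}\right).
\]
This equals $128(c^{-1/2}+c^{-1})+o(1)$, which is the claimed bound.

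\textbf{Part (ii).} Set $L=\log(er/s)$. The ratio $(\text{error})/((k-2)/r)$ equals $128\bigl(\sqrt{sL}+L\bigr)/(k-2)$. Since $s\le k-2$, it suffices to show that $L/(k-2)\to 0$, because then $\sqrt{sL}/(k-2)\le\sqrt{L/(k-2)}\to0$. I split into two cases.
\begin{itemize}
\item If $s=k-2$, then $L=\log(er/(k-2))\le\log(er/k)+\log 3=o(k)$ by hypothesis. Since $k\to\infty$ (otherwise $\log(er/k)\to\infty$ cannot be $o(k)$), this gives $L=o(k-2)$.
\item If $s=r-k+2<k-2$, then $k>r/2+2$, so $k-2\ge r/2$. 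Also $L\le\log(er)=O(\log r)$ and $\sqrt{sL}\le\sqrt{rL}$. Hence the ratio is $O\bigl(\sqrt{\log r/r}+\log r/r\bigr)=o(1)$.
\end{itemize}
The step needing the most care is this case split on which term attains the minimum $s$, together with checking that $k\to\infty$ is forced by the hypothesis.

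\textbf{Part (iii).} Suppose $k/r\to\alpha$. Then $s/r\to\beta$, so $\log(er/s)\to\log(e/\beta)$ and $\sqrt{s\log(er/s)}=\sqrt r\bigl(\sqrt{\beta\log(e/\beta)}+o(1)\bigr)$. The second term satisfies $\frac{128}{r}\log(er/s)=O(1/r)=o(1/\sqrt r)$. Substituting both into \cref{thm:main} yields the stated bound.
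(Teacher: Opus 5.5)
Your proposal is correct and follows essentially the same route as the paper: you read off each regime from the two-sided estimate of \cref{thm:main}, using the same identification $s=k-2$ in (i), the same two-case split in (ii) (yours according to which term attains the minimum, the paper's according to $k-2\le r/2$ versus $k-2>r/2$), and the same limit $s/r\to\beta$ in (iii). In (ii) you reduce everything to $\log(er/s)/(k-2)\to0$ via $\sqrt{s\log(er/s)}/(k-2)\le\sqrt{\log(er/s)/(k-2)}$, which is a slightly tidier packaging of the same computation.
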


\medskip
{\bf\noindent Notation.} Throughout this paper, we write $[n]=\{1,\ldots,n\}$. For integers $a\ge h\ge0$, we write $(a)_h=a(a-1)\cdots(a-h+1)$, with $(a)_0=1$.  
We write $a_r=o(b_r)$ when $a_r/b_r\to0$ and $a_r=O(b_r)$ when $|a_r/b_r|$ remains bounded.

\subsection*{Organization and proof outline}

We now describe the organization of the paper together with a proof outline.

In Section \ref{sec:2}, we prove Theorem \ref{end:thm:main}.  The proof of Theorem \ref{end:thm:main} uses the following recursive estimate: $\pi(H_3^r)\ge \frac{1}{360r\log r}\,\pi\bigl(H_3^{\lceil 129\log r\rceil}\bigr).$
We first use a dense $H_3^t$-free family and independent random permutations to construct a local colouring with few colours.  
We then divide a large vertex set into equal parts and consider the $r$-sets that meet every part in at most $\lceil 128\log r\rceil$ vertices.  
Each such $r$-set is assigned a label with two residue coordinates.  
If three edges of a possible copy of $H_3^r$ have the same label, the first coordinate forces the three deleted vertices to lie in the same part, 
while the second coordinate contradicts the property of the local colouring.  
The pair-covering blow-up lemma changes this finite construction into the recursive lower bound in Theorem \ref{end:thm:recursion}.  
A product estimate then controls its iteration and proves Theorem \ref{end:thm:main}.

In Section \ref{sec:3}, we prove Theorem \ref{thm:main} and Corollary \ref{cor:applications}.  
Sidorenko's circular construction and the R\'enyi exponential representation reduce the required lower bound to estimating the minimum of cyclic sums of independent exponential random variables.  
After centering these variables, we divide the cyclic sums into blocks and apply a maximal inequality for ordinary partial sums.  
This gives a uniform estimate for the minimum when $m\le r/2$.  When $m>r/2$, a complement identity reduces the problem to cyclic sums of length $r-m$.  Combining the resulting lower bound with the known upper bound proves the finite estimate in Theorem \ref{thm:main}. We later use Theorem \ref{thm:main} to prove Corollary \ref{cor:applications}.

\section{Results for the graph \texorpdfstring{$H_3^r$}{H3r}}\label{sec:2}
In this section, we first establish the following recursive estimate and then use it to prove Theorem \ref{end:thm:main}.

\begin{theorem}\label{end:thm:recursion}
There is an absolute constant $r_0$ such that, for every $r\ge r_0$,
$$\pi(H_3^r)\ge \frac{1}{360r\log r}\,\pi\bigl(H_3^{\lceil 129\log r\rceil}\bigr).$$
\end{theorem}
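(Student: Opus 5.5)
We will build the construction that the outline describes and then pass to densities using a blow-up argument. Put $t=\lceil 129\log r\rceil$ and $b=\lceil 128\log r\rceil$. For $H_3^r$ the useful reformulation is the following. A copy of $H_3^r$ is an $(r+1)$-set $S$ together with three vertices $x,y,z\in S$ such that $S-x$, $S-y$ and $S-z$ are all edges. Equivalently, we have an $(r-2)$-set $A$ and three vertices such that $A\cup\{y,z\}$, $A\cup\{x,z\}$ and $A\cup\{x,y\}$ are edges. So a natural way to be $H_3^r$-free is to give every $r$-set a label and keep only the $r$-sets of one label class. The labelling must ensure that three $r$-sets in such a configuration never share a label.

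\textbf{Step 1 (local colouring).} Take an $H_3^t$-free $t$-graph $F$ on $[N]$ with density close to $\pi(H_3^t)$. Using independent uniformly random permutations $\sigma_1,\dots,\sigma_L$ of $[N]$, we colour each $t$-subset $T$ (or each short ordered tuple) by the index $i$ of the first permutation with $\sigma_i(T)\in F$. With $L\approx \pi(H_3^t)^{-1}\log(\cdot)$ permutations, all but a negligible fraction of $t$-sets receive a colour. Each colour class is a copy of a subgraph of $F$, so it is $H_3^t$-free.

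\textbf{Step 2 (labels).} Partition a large vertex set $V$ into $p$ equal parts, where $p\approx r/\log r$ is chosen so that a uniformly random $r$-set meets each part in about $\log r$ vertices. By a Chernoff bound, a random $r$-set meets every part in at most $b$ vertices with probability $1-o(1)$; we restrict to these $r$-sets. To such an $r$-set $e$ we assign a label with two coordinates:
\begin{itemize}
\item the first coordinate is $\sum_j j\,|e\cap V_j| \bmod p$ (or a similar residue);
\item the second coordinate is a residue built from the local colours of the traces $e\cap V_j$, each padded up to size $t$ by fixed dummy vertices of the part.
\end{itemize}
Take three edges $A+yz$, $A+xz$, $A+xy$ with equal first coordinate. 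Their residues differ by the parts containing $x,y,z$, which forces $x,y,z$ to lie in the same part $V_j$. Then the traces in every other part coincide. In part $j$ the three traces $B+yz$, $B+xz$, $B+xy$ have size at most $b<t$. After padding they form a copy of $H_3^t$ missing structure, so they cannot all lie in a single colour class. Here $t-b\ge\log r$ gives the slack needed for padding. Hence the three second coordinates cannot all coincide once the second coordinate encodes the part-$j$ colour appropriately (summing colours modulo a prime $q>L$ with distinct weights per part).

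\textbf{Step 3 (counting).} The number of labels is about $p\cdot q\approx (r/\log r)\cdot L$. By pigeonhole, the largest label class has density at least $(1-o(1))/(pq)$. This gives density of order $\pi(H_3^t)/(r\log r)$ once the constants are tracked; tracking them is where $360$ comes from. The construction only covers $r$-sets in the "balanced" family, so to obtain a genuine Turán density bound we apply the pair-covering blow-up lemma. Blowing up a finite $H_3^r$-free construction preserves freeness because $H_3^r$ is covered by pairs: any two vertices lie in a common edge. This is what turns the finite density into a lower bound on $\pi$.

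\textbf{Main obstacle.} The hard part is making the second coordinate work while keeping only $O(\pi(H_3^t)^{-1}\log r)$ labels. The colour of the part-$j$ trace must determine the label, yet colours from all the other parts must also be absorbed into one residue. I would first try a weighted sum of colours modulo a prime $q$. Since the other parts contribute identically to all three edges, equality of the sums forces equality of the part-$j$ colours, and that equality is forbidden.
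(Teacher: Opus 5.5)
Your outline is correct and is the paper's proof. It uses random-permutation local colourings built from a dense $H_3^t$-free family, about $r/\log r$ parts with traces capped at $\lceil 128\log r\rceil$, the label $\bigl(\sum_j j|e\cap V_j| \bmod p,\ \text{sum of local colours}\bmod q\bigr)$, pigeonhole, and Sidorenko's pair-covering blow-up lemma. The one difference is that you pad traces with fixed auxiliary vertices (which must never occur in edges) to get a single $t$-uniform colouring per part, whereas the paper keeps a separate colouring $\varphi_{i,s}$ for each trace size $s\le\lceil128\log r\rceil$ and uses $\pi(H_3^s)\ge\pi(H_3^{\lceil128\log r\rceil})$.

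To actually reach the constant $360$ you still need the paper's bookkeeping. Colour every $t$-set, since ``all but a negligible fraction of $t$-sets'' says nothing about the special padded sets. Choose $p\approx r/(16\log r)$ and $|V|\approx r^2$, so that there are only $O(\pi(H_3^t)^{-1}\log^2 r)$ colours. Take the weights invertible modulo $q$ (e.g.\ all equal to $1$) rather than distinct residues, which would force $q>p$ and ruin the count.
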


To prove our results, we need some preliminary results. The following lemma is folklore. 

\begin{lemma}\label{end:lem:finite-density}
Let $F$ be an $r$-graph, and for $n\ge r$ define $d_F(n)=\mathrm{ex}(n,F)/\binom nr$.
Then $d_F(n)$ is nonincreasing in $n$.
Hence $d_F(n)\ge \pi(F)$ for every $n\ge r$.
\end{lemma}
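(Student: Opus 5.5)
The plan is the standard averaging argument over $n$-vertex subsets of an $(n+1)$-vertex extremal graph. Fix $n\ge r$ and let $G$ be an $F$-free $r$-graph on $n+1$ vertices with $e(G)=\mathrm{ex}(n+1,F)$ edges. For each vertex $v$, the induced subgraph $G-v$ on the remaining $n$ vertices is still $F$-free, since any copy of $F$ in it would also be a copy in $G$. Hence
\[
e(G-v)\le \mathrm{ex}(n,F) \quad\text{for every } v.
\]

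Next we double count. Each edge of $G$ has $r$ vertices, so it survives in $G-v$ for exactly $n+1-r$ choices of $v$. Summing over $v$ gives
\[
(n+1-r)\,\mathrm{ex}(n+1,F)=\sum_v e(G-v)\le (n+1)\,\mathrm{ex}(n,F).
\]
Using the binomial identity $\binom{n+1}{r}=\frac{n+1}{n+1-r}\binom nr$, dividing through yields $d_F(n+1)\le d_F(n)$. So $d_F$ is nonincreasing in $n$.

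Finally, $d_F(n)\in[0,1]$ and the sequence is monotone, so the limit $\pi(F)=\lim_n d_F(n)$ exists. A nonincreasing sequence is bounded below by its limit, hence $d_F(n)\ge\pi(F)$ for every $n\ge r$.

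There is no real obstacle here. The only point needing care is the count $n+1-r$ of vertices whose deletion keeps a given edge, together with the matching binomial identity.
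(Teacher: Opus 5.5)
Your proposal is correct: the paper states this lemma as folklore and gives no proof, and your argument (delete each vertex of an extremal $F$-free $r$-graph on $n+1$ vertices, note each edge survives $n+1-r$ deletions, then use $\binom{n+1}{r}=\frac{n+1}{n+1-r}\binom nr$) is exactly the standard Katona--Nemetz--Simonovits averaging proof it relies on. Your last step also correctly shows the limit $\pi(F)$ exists, which the paper's definition takes for granted.
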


An $r$-graph is \emph{pair-covering} if every pair of its vertices is contained in an edge.  
The $r$-graph $H_3^r$ is pair-covering.  
Indeed, we write its edges as $S\setminus\{x\}$, $S\setminus\{y\}$, and $S\setminus\{z\}$, where $x,y,z$ are distinct.  
Any two vertices of $S$ lie in the edge obtained by deleting one of $x,y,z$ outside that pair. The following result of Sidorenko~\cite{Sidorenko2024} relates the Tur\'an density of a pair-covering $r$-graph to its finite Tur\'an numbers.
\begin{lemma}[Sidorenko~\cite{Sidorenko2024}]\label{end:lem:blowup}
For any pair-covering $r$-graph $F$, 
$\pi(F)\ge r!n^{-r}\mathrm{ex}(n,F)$ holds for all $n$.
\end{lemma}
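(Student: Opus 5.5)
The plan is the standard blow-up argument: start from an extremal $F$-free $r$-graph on $n$ vertices, blow each vertex up into a large independent class, and use the pair-covering property to show the blow-up is still $F$-free. Computing its edge density then gives the bound.

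\emph{Construction.} Fix $n\ge r$ and let $G$ be an $F$-free $r$-graph on $[n]$ with $e(G)=\mathrm{ex}(n,F)$. For a large integer $N$, let $V_1,\dots,V_n$ be disjoint sets of size $N$. Define $G(N)$ on $V=V_1\cup\dots\cup V_n$ by declaring an $r$-set $T\subseteq V$ to be an edge precisely when
\begin{itemize}
\item $T$ meets $r$ distinct classes $V_{i_1},\dots,V_{i_r}$, and
\item $\{i_1,\dots,i_r\}\in E(G)$.
\end{itemize}
Each edge of $G$ gives exactly $N^r$ edges of $G(N)$, so $e(G(N))=\mathrm{ex}(n,F)N^r$ on $nN$ vertices.

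\emph{$F$-freeness.} Suppose $G(N)$ contains a copy of $F$ with vertex set $U$. Let $\phi\colon V\to[n]$ send $V_i$ to $i$.
\begin{itemize}
\item Since $F$ is pair-covering, any two distinct $u,v\in U$ lie in a common edge of the copy.
\item That edge is an edge of $G(N)$, so it is transversal to the partition, and hence $\phi(u)\neq\phi(v)$.
\item Therefore $\phi$ is injective on $U$.
\item Every edge of the copy is mapped by $\phi$ onto an edge of $G$, by the definition of $G(N)$.
\end{itemize}
Thus $\phi(U)$ spans a copy of $F$ in $G$, which contradicts the choice of $G$. This is the only place where the pair-covering hypothesis is used, and it is the key step. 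Without it, two vertices of the copy could sit in the same class and the projection would collapse.

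\emph{Density computation.} Because $G(N)$ is $F$-free,
$$\frac{\mathrm{ex}(nN,F)}{\binom{nN}{r}}\ge \frac{\mathrm{ex}(n,F)N^r}{\binom{nN}{r}}\longrightarrow \frac{r!\,\mathrm{ex}(n,F)}{n^r}\qquad (N\to\infty).$$
The left-hand side converges to $\pi(F)$ along the subsequence $nN$, since the full limit defining $\pi(F)$ exists; alternatively, this follows from its monotonicity in Lemma \ref{end:lem:finite-density}. This yields $\pi(F)\ge r!\,n^{-r}\mathrm{ex}(n,F)$. The case $n<r$ is trivial, since then $\mathrm{ex}(n,F)=0$.
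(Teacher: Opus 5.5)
Your blow-up argument is correct: the pair-covering hypothesis forces any copy of $F$ in $G(N)$ to project injectively to a copy of $F$ in $G$, and the density of $G(N)$ tends to $r!\,\mathrm{ex}(n,F)/n^r$. The paper gives no proof of its own and simply cites Sidorenko, and your blow-up argument is the standard proof of this fact.
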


The following standard form of the hypergeometric upper-tail estimate follows from Chv\'atal~\cite{Chvatal1979}.

\begin{lemma}\label{end:lem:hypergeom}
Let $[n]$ be the $n$-element set and $S\subseteq [n]$.  We randomly choose an $r$-element subset $B$ from $[n]$ and let $X$ be the random variable $X=|B\cap S|$.
For every integer $h\ge1$, we have
$$\mathbb{P}(X\ge h)\le \left(\frac{e \,\mathbb{E}(X)}{h}\right)^h.$$
Consequently, if $h\ge4 \,\mathbb{E}(X)$, then $\mathbb{P}(X\ge h)\le(e/4)^h$.
\end{lemma}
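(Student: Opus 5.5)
The plan is to avoid the general Chv\'atal machinery and prove the bound directly by a first-moment (union bound) argument over $h$-subsets of $S$. Write $|S|=m$ and $\mu=\mathbb{E}(X)=rm/n$, since each element of $[n]$ lies in the uniformly random $r$-set $B$ with probability $r/n$. If $h>\min\{m,r\}$ then $\mathbb{P}(X\ge h)=0$ and there is nothing to prove, so assume $h\le \min\{m,r\}$.

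\emph{Union bound.} The event $X\ge h$ implies that some $h$-element subset $T\subseteq S$ satisfies $T\subseteq B$. For a fixed $T$ with $|T|=h$,
\[
\mathbb{P}(T\subseteq B)=\binom{n-h}{r-h}\Big/\binom nr=\frac{(r)_h}{(n)_h}\le\Bigl(\frac rn\Bigr)^h,
\]
because $(r-i)/(n-i)\le r/n$ for $0\le i<h$, using $r\le n$. Summing over the $\binom mh$ choices of $T$ gives
\[
\mathbb{P}(X\ge h)\le\binom mh\Bigl(\frac rn\Bigr)^h\le\frac{m^h}{h!}\Bigl(\frac rn\Bigr)^h=\frac{\mu^h}{h!}.
\]

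\emph{Stirling-type bound.} Since $e^h=\sum_{j\ge0}h^j/j!\ge h^h/h!$, we have $h!\ge (h/e)^h$. Substituting this gives
\[
\mathbb{P}(X\ge h)\le\Bigl(\frac{e\mu}{h}\Bigr)^h,
\]
which is the first claim. For the consequence, if $h\ge4\mu$ then $e\mu/h\le e/4$, and hence $\mathbb{P}(X\ge h)\le(e/4)^h$.

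No step is a serious obstacle. The only point needing care is the inequality $(r)_h/(n)_h\le (r/n)^h$, which relies on $r\le n$, together with disposing of the degenerate cases $h>m$ or $h>r$ separately.
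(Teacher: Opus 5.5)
Your proof is correct, but it takes a different route from the paper. The paper gives no argument for this lemma and simply cites Chv\'atal's hypergeometric tail inequality. That inequality says the upper tail of $X$ obeys the same Chernoff--Hoeffding bound as a binomial variable with $r$ trials and success probability $|S|/n$, namely
$\mathbb{P}(X\ge h)\le(\mu/h)^h\bigl((r-\mu)/(r-h)\bigr)^{r-h}\le(\mu/h)^he^{h-\mu}$.
The stated bound then follows by discarding the factor $e^{-\mu}$.

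Your argument is instead a first-moment bound on $\binom Xh$. The sum $\binom mh\,(r)_h/(n)_h$ you form is exactly $\mathbb{E}\binom Xh$. Sampling without replacement enters only through the elementary inequality $(r)_h/(n)_h\le(r/n)^h$, and you then use $h!\ge(h/e)^h$.

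What your route buys is a self-contained two-line proof with no external machinery. It yields $\mu^h/h!$, which is all the paper needs: the lemma is only applied in the large-deviation regime $h=L\ge4\,\mathbb{E}(X_i)$. Your bound can even be slightly sharper when $\mu$ is very small.

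What it gives up is the factor $e^{-\mu}$ that Chv\'atal's bound retains. Your bound is only informative once $h$ is roughly larger than $e\mu$. The Chernoff form $(\mu/h)^he^{h-\mu}$ is below $1$ for every $h>\mu$, so it also covers moderate deviations. Chv\'atal's inequality additionally comes with a matching lower-tail bound.
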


The following lemma covers all $r$-subsets of $[n]$ by permuted copies of one dense $H_3^r$-free 
family and colours each $r$-set by the first copy that contains it.

\begin{lemma}\label{end:lem:local-colour}
Let $n\ge r+1$ and $r\ge2$.  There is an edge colouring $\varphi_r$ of
$\binom{[n]}{r}$ using at most
$$ \left\lceil\frac{1+\log\binom nr}{\pi(H_3^r)}\right\rceil$$
colours such that there is no monochromatic copy of $H_3^r$.

\end{lemma}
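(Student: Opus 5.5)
The plan is a random covering argument. Let $N=\binom nr$ and $p=\pi(H_3^r)$, and let $F\subseteq\binom{[n]}{r}$ be an extremal $H_3^r$-free family, so $|F|=\mathrm{ex}(n,H_3^r)$. By \cref{end:lem:finite-density}, $|F|\ge p\,N$. (If $p=0$ the bound is vacuous, so assume $p>0$.)

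Next, take independent uniformly random permutations $\sigma_1,\dots,\sigma_m$ of $[n]$, with
\[
m=\left\lceil\frac{1+\log N}{p}\right\rceil ,
\]
and consider the copies $\sigma_i(F)$. Each copy is again $H_3^r$-free, since being $H_3^r$-free is invariant under relabelling vertices. For a fixed $r$-set $e$, the set $\sigma_i^{-1}(e)$ is a uniformly random $r$-set. Hence
\[
\mathbb{P}\bigl(e\in\sigma_i(F)\bigr)=|F|/N\ge p,
\]
independently over $i$. Therefore
\[
\mathbb{P}\bigl(e\notin\textstyle\bigcup_i\sigma_i(F)\bigr)\le(1-p)^m\le e^{-pm}\le e^{-1}/N .
\]
A union bound over all $N$ sets gives an expected number of uncovered $r$-sets of at most $e^{-1}<1$. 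So some choice of permutations covers all of $\binom{[n]}{r}$.

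Fix such a choice and define $\varphi_r(e)=\min\{i: e\in\sigma_i(F)\}$. This uses at most $m$ colours. Every colour class $\varphi_r^{-1}(i)$ is a subfamily of $\sigma_i(F)$. A monochromatic copy of $H_3^r$ in colour $i$ would therefore be a copy of $H_3^r$ inside $\sigma_i(F)$, which is impossible.

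No step is really an obstacle. The only care needed is in the inequality $|F|/N\ge\pi(H_3^r)$, which is exactly what \cref{end:lem:finite-density} supplies, and in the constant: the "$1+$" in $1+\log N$ makes the expected number of uncovered sets strictly less than $1$.
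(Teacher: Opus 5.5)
Your proposal is correct and is essentially the paper's own proof. The paper likewise takes an $H_3^r$-free family of density at least $\pi(H_3^r)$ from Lemma~\ref{end:lem:finite-density}, uses $\lceil(1+\log\binom nr)/\pi(H_3^r)\rceil$ independent uniformly random permutations so that the expected number of uncovered $r$-sets is at most $e^{-1}<1$, and colours each $r$-set by the first permuted copy that contains it.
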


\begin{proof}
By Lemma \ref{end:lem:finite-density}, there is an $H_3^r$-free graph  $\mathcal I\subseteq\binom{[n]}r$ such that $|\mathcal I|/\binom nr\ge\pi(H_3^r)$.  Let
$t=\left\lceil(1+\log\binom nr)/\pi(H_3^r)\right\rceil$.
Choose permutations $\pi_1,\ldots,\pi_t$ of $[n]$ independently and uniformly at random.  Thus, for each $j\in[t]$, every permutation of $[n]$ is chosen with probability $1/n!$.  For $I\subseteq[n]$, let $\pi_j(I)=\{\pi_j(x):x\in I\}$, and let $\mathcal I_j=\{\pi_j(I):I\in\mathcal I\}$.  Since a permutation only changes the labels of the vertices, it preserves copies of $H_3^r$.  Hence every $\mathcal I_j$ is $H_3^r$-free.

Fix $A\in\binom{[n]}r$.  For each fixed $I\in\binom{[n]}r$, exactly $r!(n-r)!$ permutations of $[n]$ map $I$ onto $A$. 
Therefore $\mathbb{P}(\pi_j(I)=A)=r!(n-r)!/n!=1/\binom nr$.
Moreover, for distinct $I,I'\in\mathcal I$, the events $\pi_j(I)=A$ and $\pi_j(I')=A$ are disjoint, since $\pi_j$ is a bijection.  
It follows that $\mathbb{P}(A\in\mathcal I_j)=|\mathcal I|/\binom nr$.

Since $\pi_1,\ldots,\pi_t$ are independent, the events $A\notin\mathcal I_1,\ldots,A\notin\mathcal I_t$ are independent.  Hence
$$ \mathbb{P}(A\notin\mathcal I_1\cup\cdots\cup\mathcal I_t)
 =\left(1-\frac{|\mathcal I|}{\binom nr}\right)^t
 \le \exp\left(-\frac{t|\mathcal I|}{\binom nr}\right)
 \le e^{-\pi(H_3^r)t}.$$
The first inequality follows from $1-u\le e^{-u}$ for $0\le u\le1$, and the second follows from $|\mathcal I|/\binom nr\ge\pi(H_3^r)$.

For each $A\in\binom{[n]}r$, let $X_A$ be the indicator of the event that $A$ does not belong to $\mathcal I_1\cup\cdots\cup\mathcal I_t$, and let $X=\sum_{A\in\binom{[n]}r}X_A$.  Thus, $X$ is the number of uncovered $r$-sets.  By linearity of expectation,
$$ \mathbb{E}(X)
 =\sum_{A\in\binom{[n]}r}\mathbb{P}(A\notin\mathcal I_1\cup\cdots\cup\mathcal I_t)
 \le\binom nr e^{-\pi(H_3^r)t}.$$
By the choice of $t$, we have $\pi(H_3^r)t\ge1+\log\binom nr$, and hence
$$ \mathbb{E}(X)\le\binom nr\exp\left(-1-\log\binom nr\right)=e^{-1}<1.$$
Since $X$ is a nonnegative integer-valued random variable, there is a choice of $\pi_1,\ldots,\pi_t$ for which $X=0$.  
Indeed, if every choice left at least one $r$-set uncovered, then $X\ge1$ for every choice, 
which would imply $\mathbb{E}(X)\ge1$.  
We may therefore fix permutations satisfying
$\mathcal I_1\cup\cdots\cup\mathcal I_t=\binom{[n]}r$.

For each $A\in\binom{[n]}r$, it would belong to many $\mathcal I_j$'s. We only keep it in one $\mathcal I_j$. That is let $\mathcal I_j'=\mathcal I_j\setminus (\mathcal I_1\cup \cdots \cup \mathcal I_{j-1}) $. Then  $\mathcal I_1',\ldots,\mathcal I_t'$ is an edge coloring of $\binom{[n]}{r}$ using at most $t$ colors. Each $\mathcal{I}_j'$ is a subgraph of $\mathcal{I}_j$, so it is still $H_3^r$-free. 

The proof is complete.
\end{proof}

\medskip
We now prove Theorem \ref{end:thm:recursion}.

\begin{proof}[\bfseries{Proof of Theorem \ref{end:thm:recursion}}]
Fix a sufficiently large integer $r$.  We first define four parameters:
$$ q=\left\lfloor\frac{r}{16\log r}\right\rfloor,
 \quad L=\lceil128\log r\rceil,
 \quad M=\left\lceil\frac{r^2}{q}\right\rceil,
 \quad N=qM.$$
We will later define the common number $P$ of local colours.  The first four parameters satisfy
\begin{equation}\label{end:eq:param-estimates}
 \frac{r}{32\log r}\le q\le\frac{r}{16\log r},
 \quad 4r\le qL\le10r,
 \quad 16r\log r\le M\le48r\log r.
\end{equation}
For sufficiently large $r$, one has $r/(16\log r)\ge2$, and hence $\lfloor r/(16\log r)\rfloor\ge r/(32\log r)$.  This proves the two bounds on $q$.  Since $128\log r\le L\le128\log r+1$, the lower bounds on $q$ and $L$ give $qL\ge4r$, while
$qL\le r(128\log r+1)/(16\log r)\le10r$ for sufficiently large $r$.  Finally, $q\le r/(16\log r)$ gives $M\ge r^2/q\ge16r\log r$, and $q\ge r/(32\log r)$ gives
$M\le r^2/q+1\le32r\log r+1\le48r\log r$.  
Thus \eqref{end:eq:param-estimates} holds.

Partition an $N$-element set $V$ into $q$ pairwise disjoint sets $V_0,\ldots,V_{q-1}$, each of size $M$.  Fix $i\in\{0,\ldots,q-1\}$ and $t$ with $2\le t\le L$.  
We apply Lemma \ref{end:lem:local-colour} to the $t$-subsets of $V_i$.  
Since $t\le L$, repeated use of \eqref{eq:monotonicity}, together with Theorem  \ref{end:lem:positive}, yields $\pi(H_3^t)\ge\pi(H_3^L)>0$.  
Thus Lemma \ref{end:lem:local-colour} provides an edge-colouring of $\binom{V_i}t$ in which every $(t+1)$-subset of $V_i$ contains at most two $t$-subsets of any one colour, and the number of colours is at most $\left\lceil(1+\log\binom Mt)/\pi(H_3^t)\right\rceil$.

The standard estimate $\binom Mt\le(eM/t)^t$ yields $\log\binom Mt\le t\log(eM/t)\le L\log(eM)$.  Since $L\log(eM)\ge1$ for all sufficiently large $r$, we have $1+\log\binom Mt\le2L\log(eM)$.  Moreover, $\pi(H_3^t)\ge\pi(H_3^L)$.  It follows that each local colouring uses at most
$$ \left\lceil\frac{1+\log\binom Mt}{\pi(H_3^t)}\right\rceil
 \le \left\lceil\frac{2L\log(eM)}{\pi(H_3^L)}\right\rceil
 \le P,
 \quad \text{where} \quad
 P:=\left\lceil\frac{3L\log(eM)}{\pi(H_3^L)}\right\rceil.$$
After relabelling the colours if necessary, we may regard every local colouring as taking values in $\{0,1,\ldots,P-1\}$, with some colour labels left unused when fewer than $P$ colours are required.  We denote the colouring corresponding to $(i,t)$ by
$\varphi_{i,t}:\binom{V_i}t\to\{0,1,\ldots,P-1\}$.
For $t=0$ and $t=1$, let $\varphi_{i,t}$ be the constant zero map.  The required local property holds automatically in these two cases, since a $(t+1)$-set has only $t+1\le2$ subsets of size $t$ and therefore cannot contain three such subsets of the same colour.

Call an $r$-set $B\subseteq V$ \emph{good} if $|B\cap V_i|\le L$ for every $i\in\{0,\ldots,q-1\}$, 
and let $\mathcal G$ be the family of all good $r$-sets.  
We first estimate the size of $\mathcal G$.  

\begin{claim}\label{claim:eq:good-count}
We have 
$ |\mathcal G|=(1-o(1))\binom Nr.$
\end{claim}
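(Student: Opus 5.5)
We bound the proportion of bad $r$-sets by a union bound over the $q$ parts, using Lemma \ref{end:lem:hypergeom}. Choose $B\in\binom Vr$ uniformly at random. Then $B$ is bad exactly when $|B\cap V_i|\ge L+1$ for some $i\in\{0,\ldots,q-1\}$. So
$$1-\frac{|\mathcal G|}{\binom Nr}=\mathbb{P}(B\notin\mathcal G)\le\sum_{i=0}^{q-1}\mathbb{P}\bigl(|B\cap V_i|\ge L+1\bigr)=q\,\mathbb{P}\bigl(|B\cap V_0|\ge L+1\bigr),$$
where the last equality holds because all parts have the same size $M$.

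Next we estimate the single-part probability. Put $X=|B\cap V_0|$. Since $B$ is a uniform $r$-subset of the $N$-set $V$ and $|V_0|=M=N/q$, we have $\mathbb{E}(X)=rM/N=r/q$. By \eqref{end:eq:param-estimates}, $q\ge r/(32\log r)$, so $\mathbb{E}(X)\le32\log r$. Take $h=L+1\ge128\log r\ge4\,\mathbb{E}(X)$. The consequence in Lemma \ref{end:lem:hypergeom} then gives
$$\mathbb{P}(X\ge L+1)\le (e/4)^{L+1}\le (e/4)^{128\log r}=r^{-128\log(4/e)}.$$
Since $\log(4/e)>0.38$, the exponent exceeds $48$, and so $\mathbb{P}(X\ge L+1)\le r^{-48}$.

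Finally, combine the two steps. Using $q\le r/(16\log r)\le r$,
$$\mathbb{P}(B\notin\mathcal G)\le r\cdot r^{-48}=r^{-47}=o(1)\quad\text{as } r\to\infty,$$
which proves $|\mathcal G|=(1-o(1))\binom Nr$.

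There is no real obstacle. The only points to check are that $h=L+1$ meets the hypothesis $h\ge4\,\mathbb{E}(X)$ of Lemma \ref{end:lem:hypergeom}, and that $(e/4)^{L}$ decays faster than the factor $q$ grows. Both follow from the choice $L=\lceil128\log r\rceil$ and the bound $q\ge r/(32\log r)$ in \eqref{end:eq:param-estimates}.
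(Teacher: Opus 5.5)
Your proof is correct and follows the paper's argument: a union bound over the $q$ parts combined with the hypergeometric tail bound of Lemma~\ref{end:lem:hypergeom}, using $\mathbb{E}(X)=r/q\le 32\log r$ and $L\ge 128\log r$. The only differences are cosmetic: you apply the lemma with $h=L+1$ instead of $h=L$, and you record the sharper exponent $128\log(4/e)>48$ where the paper only uses $20$.
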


\begin{proof}
Choose $B$ uniformly at random from $\binom Vr$ and fix $i\in\{0,\ldots,q-1\}$.  The random variable $X_i:=|B\cap V_i|$ has the hypergeometric distribution obtained by choosing $r$ elements without replacement from a set of size $N$, of which the $M$ elements in $V_i$ are distinguished.  Since $N=qM$, it means
$\mathbb{E}( X_i)=r|V_i|/|V|=rM/N=r/q$.
By \eqref{end:eq:param-estimates}, we have $16\log r\le r/q\le32\log r$, while $L\ge128\log r$.  
In particular, $L\ge4\mathbb{E}(X_i)$.

Applying Lemma \ref{end:lem:hypergeom} with $h=L$, we obtain
$$ \mathbb{P}(|B\cap V_i|>L)
 \le\mathbb{P}(X_i\ge L)
 \le\left(\frac{e\mathbb{E}(X_i)}{L}\right)^L
 \le(e/4)^L
 \le r^{-20}.$$
For the last inequality, since $L\ge128\log r$,
$(e/4)^L\le\exp(-128\log r\log(4/e))=r^{-128\log(4/e)}$,
and $128\log(4/e)>20$.

The event $B\notin\mathcal G$ occurs only if $|B\cap V_i|>L$ for at least one $i\in\{0,\ldots,q-1\}$.  Therefore, by the union bound and the estimate $q\le r$ from \eqref{end:eq:param-estimates},
$\mathbb{P}(B\notin\mathcal G)\le\sum_{i=0}^{q-1}\mathbb{P}(|B\cap V_i|>L)\le qr^{-20}\le r^{-19}=o(1)$.
Since $B$ is chosen uniformly from $\binom Vr$, we have
$\mathbb{P}(B\in\mathcal G)=|\mathcal G|/\binom Nr$.
Accordingly,
\begin{equation*}
 |\mathcal G|=(1-o(1))\binom Nr,
\end{equation*}
as required.
\end{proof}

For each $B\in\mathcal G$, assign the ordered pair of residues
\begin{equation}\label{end:eq:two-coordinate-label}
 \left(
   \sum_{i=0}^{q-1}i|B\cap V_i|\bmod q,
   \quad
   \sum_{i=0}^{q-1}\varphi_{i,|B\cap V_i|}(B\cap V_i)\bmod P
 \right).
\end{equation}
This label is well defined.  Indeed, since $B$ is good, we have $0\le |B\cap V_i|\le L$ for every $i$.  Hence $B\cap V_i$ belongs to the domain of $\varphi_{i,|B\cap V_i|}$, including the cases $|B\cap V_i|=0$ and $|B\cap V_i|=1$.  The first coordinate of \eqref{end:eq:two-coordinate-label} is a residue modulo $q$ and therefore has $q$ possible values.  Similarly, the second coordinate is a residue modulo $P$ and has $P$ possible values.  Thus there are at most $qP$ possible labels.

The labels divide $\mathcal G$ into at most $qP$ classes.  Let $\mathcal F$ be a largest such class.  By the pigeonhole principle,
$|\mathcal F|\ge|\mathcal G|/(qP)$.
Using Claim \ref{claim:eq:good-count}, we obtain
\begin{equation}\label{end:eq:class-size}
 |\mathcal F|\ge\frac{|\mathcal G|}{qP}
 =(1-o(1))\frac1{qP}\binom Nr.
\end{equation}

\begin{claim}\label{claim:f-is-h3r-free}
$\mathcal F$ is $H_3^r$-free.
\end{claim}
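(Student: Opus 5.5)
We argue by contradiction. Suppose $\mathcal F$ contains a copy of $H_3^r$: there is an $(r+1)$-set $S\subseteq V$ and distinct $x,y,z\in S$ such that $S\setminus\{x\}$, $S\setminus\{y\}$, $S\setminus\{z\}$ all lie in $\mathcal F$, hence carry the same label \eqref{end:eq:two-coordinate-label}. Write $s_i=|S\cap V_i|$. Let $x\in V_a$, $y\in V_b$, $z\in V_c$. The plan has two steps. First, use the first coordinate to show $a=b=c$. Then use the second coordinate to contradict the local colouring on that single part.

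For the first step, note that removing $x$ from $S$ lowers $s_a$ by one and leaves the other $s_i$ unchanged. So the first coordinate of $S\setminus\{x\}$ equals $\sum_i i s_i-a \pmod q$. Equality of the first coordinates for $S\setminus\{x\}$ and $S\setminus\{y\}$ gives $a\equiv b\pmod q$. Since $a,b\in\{0,\dots,q-1\}$, this forces $a=b$. Likewise $a=c$, so $x,y,z$ all lie in one part $V_a$.

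For the second step, all three edges agree with $S$ on every part $V_i$ with $i\ne a$. Their second coordinates therefore share the common term $\sum_{i\ne a}\varphi_{i,s_i}(S\cap V_i)$ and differ only in the $a$-th summand. Put $T=S\cap V_a$ and $t=s_a-1$. Equality of the second coordinates modulo $P$ gives
$$\varphi_{a,t}(T\setminus\{x\})=\varphi_{a,t}(T\setminus\{y\})=\varphi_{a,t}(T\setminus\{z\}),$$
because the colours lie in $\{0,\dots,P-1\}$ and are congruent modulo $P$. Each $S\setminus\{x\}$ is good, so $t=|(S\setminus\{x\})\cap V_a|\le L$ and $\varphi_{a,t}$ is defined.

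It remains to find the contradiction. The set $T$ is a $(t+1)$-subset of $V_a$ containing three distinct $t$-subsets of the same colour. If $t\le1$, this is impossible because $T$ has only $t+1\le2$ subsets of size $t$; note also that $|T|\ge3$ forces $t\ge2$. If $2\le t\le L$, three such subsets form a monochromatic copy of $H_3^t$. This is excluded by the property of $\varphi_{a,t}$ from Lemma \ref{end:lem:local-colour}.

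The only delicate point is the bookkeeping that the goodness bound $t\le L$ puts $T$ in the domain of a colouring built in the construction. Here it matters that $S$ itself need not be good, only the three edges; this holds because $t=s_a-1\le L$. Everything else is direct cancellation.
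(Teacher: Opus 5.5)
Your proposal is correct and follows essentially the same route as the paper. The first label coordinate forces $x,y,z$ into a single part $V_a$, and cancelling the common contributions from the other parts in the second coordinate produces three equally coloured $(|S\cap V_a|-1)$-subsets of $S\cap V_a$, which contradicts the local colouring, whose domain is guaranteed by goodness of the three edges.
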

\begin{proof}
Suppose, for the sake of contradiction, that some $(r+1)$-set $S\subseteq V$ contains three distinct vertices $x,y,z$ such that
$S\setminus\{x\}$, $S\setminus\{y\}$, and $S\setminus\{z\}$ all belong to $\mathcal F$.  
Since $\mathcal F\subseteq\mathcal G$, these three $r$-sets are good, and hence their labels in \eqref{end:eq:two-coordinate-label} are well defined.  
Let $x\in V_a$, $y\in V_b$, and $z\in V_c$, where $a,b,c\in\{0,\ldots,q-1\}$.

For any $u\in S\cap V_j$, deleting $u$ decreases $|S\cap V_j|$ by one and leaves all other intersection sizes unchanged.  
Therefore the first coordinate of the label of $S\setminus\{u\}$ is equal to $\sum_{h=0}^{q-1}h|S\cap V_h|-j$ modulo $q$.
In particular, the first coordinates of the labels of $S\setminus\{x\}$, $S\setminus\{y\}$, and $S\setminus\{z\}$ are congruent to
$\sum_{h=0}^{q-1}h|S\cap V_h|-a$,
$\sum_{h=0}^{q-1}h|S\cap V_h|-b$, and
$\sum_{h=0}^{q-1}h|S\cap V_h|-c$,
respectively.  
All members of $\mathcal F$ have the same label, so these three residues are equal, which means $a\equiv b\equiv c\pmod q$.
Since $a,b,c\in\{0,\ldots,q-1\}$, it follows that $a=b=c$.  
Thus $x,y,z$ all lie in the same part, which we denote by $V_a$.

Let $A=S\cap V_a$.  Since $x,y,z\in V_a$, the intersections of the three sets $S\setminus\{x\}$, $S\setminus\{y\}$, and $S\setminus\{z\}$ with $V_a$ are
$A\setminus\{x\}$, $A\setminus\{y\}$, and $A\setminus\{z\}$,
respectively.  These are distinct sets, each of size $|A|-1$.  Since all three $r$-sets belong to $\mathcal F\subseteq\mathcal G$, they are good, and hence $|A|-1\le L$.  Moreover, $x,y,z\in A$ are distinct, so $|A|\ge3$ and therefore $|A|-1\ge2$.  Thus the local colouring
$\varphi_{a,|A|-1}:\binom{V_a}{|A|-1}\to\{0,\ldots,P-1\}$
is defined and has the required avoidance property.

For every $h\ne a$, deleting any one of $x,y,z$ does not change the intersection with $V_h$.  Hence the contributions from all parts other than $V_a$ to the second coordinate in \eqref{end:eq:two-coordinate-label} are the same for the three sets.  Since all members of $\mathcal F$ have the same label, their second coordinates are equal modulo $P$.  Cancelling the common contributions from the parts $V_h$ with $h\ne a$, we obtain
$$ \varphi_{a,|A|-1}(A\setminus\{x\})
 \equiv\varphi_{a,|A|-1}(A\setminus\{y\})
 \equiv\varphi_{a,|A|-1}(A\setminus\{z\})\pmod P.$$
Each of these three values belongs to $\{0,\ldots,P-1\}$, so the congruences imply that the three values are equal.  Consequently, the $|A|$-set $A$ contains three distinct $(|A|-1)$-subsets of the same colour under $\varphi_{a,|A|-1}$.  This contradicts the defining property of the local colouring, which states that every $|A|$-set contains at most two $(|A|-1)$-subsets of any one colour.  Therefore $\mathcal F$ contains no copy of $H_3^r$, and hence $\mathcal F$ is $H_3^r$-free.
\end{proof}

Now, applying Lemma \ref{end:lem:blowup} to the $r$-graph on $V$ with edge set $\mathcal F$ and using Claim \ref{claim:f-is-h3r-free} and inequality \eqref{end:eq:class-size}, we obtain
\begin{align}\label{end:eq:density-before-final}
  \pi(H_3^r)&\geq \frac{r!}{N^r}|\mathcal{F}|\ge (1-o(1))\frac{1}{qP}\binom{N}{r}\frac{r!}{N^r}=(1-o(1))\frac{1}{qP}\frac{(N)_r}{N^r}\ge \frac{1}{3qP}, 
\end{align}
where the last inequality holds since $\frac{(N)_r}{N^r}>\frac12$ and $1-o(1)\ge \frac{2}{3}$ when $r$ is large.

It remains to bound $qP$.  Since $0<\pi(H_3^L)\le1$ and $L\log(eM)\ge1$, we have

$$ P\le1+\frac{3L\log(eM)}{\pi(H_3^L)}
 \le\frac{4L\log(eM)}{\pi(H_3^L)}.$$
Together with $qL\le10r$, this yields
$qP\le40r\log(eM)/\pi(H_3^L)$. Also since $M\le48r\log r$ and $48e\log r\le r^2$, it implies $\log(eM)\le3\log r$ for sufficiently large $r$.  Thus
$qP\le120r\log r/\pi(H_3^L)$.
 
Eventually, we can get the result from  \eqref{end:eq:density-before-final} and \eqref{eq:monotonicity}:
$$ \pi(H_3^r)\ge\frac1{360r\log r}\pi(H_3^L)\ge\frac1{360r\log r}\,
 \pi(H_3^{\lceil129\log r\rceil}).$$
The proof is complete.  
\end{proof}

We now start to Prove Theorem \ref{end:thm:main} using Theorem \ref{end:thm:recursion}.  The following elementary estimate controls the product arising in the iteration.

\begin{lemma}\label{end:lem:product} 
Fix constants $C>1$ and $0<c<1$.  Choose $\alpha\geq e$ such that
$\lceil C\log x\rceil\le x/2$ for every $x\ge \alpha$.  Starting from $u_0=r$, define
$u_{j+1}=\lceil C\log u_j\rceil$ while $u_j\ge \alpha$, and let $\beta$ be the first index for which $u_\beta<\alpha$.  Then, for every fixed $\varepsilon>0$ and all sufficiently large $r$, we have
$$ c^\beta\prod_{j=0}^{\beta-1}\frac1{u_j\log u_j}
 \ge\frac1{r(\log r)^{2+\varepsilon}}.$$
\end{lemma}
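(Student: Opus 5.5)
Take logarithms: we must show
$$\sum_{j=0}^{\beta-1}\bigl(\log u_j+\log\log u_j\bigr)+\beta\log(1/c)\le \log r+(2+\varepsilon)\log\log r$$
for large $r$. The $j=0$ term contributes exactly $\log r+\log\log r$. So it suffices to show that the remaining terms $j\ge1$, together with $\beta\log(1/c)$, total at most $(1+\varepsilon)\log\log r$, in fact $(1+o(1))\log\log r$.

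\emph{Step 1: the second term dominates.} We have $u_1=\lceil C\log r\rceil$, so $\log u_1=\log\log r+\log C+o(1)$. Likewise $\log\log u_1=\log\log\log r+O(1)=o(\log\log r)$. Hence the $j=1$ term is $(1+o(1))\log\log r$.

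\emph{Step 2: the tail $j\ge2$ is negligible.} For $j\ge1$ we have $u_{j+1}=\lceil C\log u_j\rceil\le C\log u_j+1$, so
$$\log u_{j+1}\le \log\log u_j+\log C+1.$$
Since $u_j\ge\alpha\ge e$ for $j<\beta$, every $\log u_j\ge1$, and the terms $\log\log u_j\le\log u_j$ are nonnegative. I will bound $\sum_{j\ge2}\log u_j$ using the halving hypothesis $u_{j+1}\le u_j/2$. This gives
$$\beta\le \log_2 r+1.$$
That crude bound is too weak for the $\beta\log(1/c)$ term, since it gives only $O(\log r)$. So I need a sharper count.

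\emph{Step 3: count $\beta$ via the iterated logarithm.} Fix a threshold $T=T(C)$, chosen large enough that $C\log x+1\le \sqrt{x}$ for $x\ge T$. While $u_j\ge T$ we then have $\log u_{j+1}\le\tfrac12\log u_j$. Even better, $u_{j+1}\approx C\log u_j$, so the sequence decays like iterated logarithms. The number of indices with $u_j\ge T$ is therefore $O(\log^* r)$. Once $u_j<T$, the halving hypothesis lets only $O(\log_2 T)=O(1)$ further steps occur before $u_j<\alpha$. Hence
$$\beta=O(\log^* r)=o(\log\log r).$$
For the logarithms, $\sum_{j\ge2}\log u_j$ is dominated by a geometric-type series whose first term is $\log u_2=O(\log\log\log r)$. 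The indices with $u_j\ge T$ contribute $O(\log\log\log r)$ in total, and the $O(1)$ indices below $T$ each contribute $O(\log T)$. The $\log\log u_j$ terms are bounded by the same quantities. So everything beyond $j=0,1$ is $o(\log\log r)$.

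\emph{Conclusion.} Adding up, the left side is at most $\log r+2\log\log r+o(\log\log r)$. This is at most $\log r+(2+\varepsilon)\log\log r$ for $r$ large, and exponentiating gives the claim. The main obstacle is Step 3: making the bound $\beta=o(\log\log r)$ rigorous, since the factor $c^\beta$ cannot afford $\beta\asymp\log r$. The threshold-$T$ splitting is the device I would use, with the constants $T$ and $\alpha$ depending only on $C$.
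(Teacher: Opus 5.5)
Your proposal is correct and has the same structure as the paper's proof. The factors $j=0,1$ contribute $\log r+2\log\log r+o(\log\log r)$ to the logarithm, and because $u_2=O(\log\log r)$, all remaining factors together with $c^\beta$ cost only $(\log r)^{o(1)}$.

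The one loosely argued point is the claim $\beta=O(\log^* r)$ in Step 3, which rests only on the heuristic that the sequence "decays like iterated logarithms". You do not need it. Apply the halving hypothesis from $u_2$ instead of from $u_0$: this gives $u_{2+h}\le u_2/2^h$, hence $\beta\le 3+\log_2 u_2=O(\log\log\log r)$. This is exactly how the paper counts $\beta$; it then bounds each tail factor crudely by $u_j\log u_j\le u_2^2$ rather than summing your geometric series of logarithms.
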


\begin{proof}
For sufficiently large $r$, we have
$u_1\le C\log r+1\le(2C+2)\log r$.
For $u_2$, we have similar calculations, 
\begin{align*}
  u_2&\le C\log(u_1)+1\le (2C+2)\log u_1\le (2C+2)\log ((2C+2)\log r) \\
 &=(2C+2)\log \log r+(2C+2)\log (2C+2)\le (2C+2)^2\log \log r.
\end{align*}

By the definition of $\alpha$ and the index $\beta$, we know $u_{j+1}=\lceil C\log u_j\rceil \le u_j/2$ for any $1\le j<\beta $, and hence $u_{2+j}\le u_2/2^j$.
Let $h=\lfloor\log_2u_2\rfloor+1$.  If  $\beta >2+h$, then   $u_{2+h}\le \frac{u_2}{2^h}<1<\alpha$, contradicting the definition of $\beta$. Hence
\begin{equation}\label{end:eq:T-bound}
 \beta\le3+\log_2\big((2C+2)^2\log\log r\big)
 =O(\log\log\log r).
\end{equation}
In particular, $\beta=o(\log\log r)$.

We next estimate the product in the denominator.  Since $u_0=r$ and
$u_1\le(2C+2)\log r$, then
$$ (u_0\log u_0)(u_1\log u_1)
 \le(2C+2)r(\log r)^2\log((2C+2)\log r).$$
For every $2\le j<\beta$,  since $u_j\ge \alpha\ge e$, we have
$u_j\le u_2\le(2C+2)^2\log\log r$ and
$\log u_j\le u_j$.  It follows that
$u_j\log u_j\le u_j^2\le((2C+2)^2\log\log r)^2$.
Using \eqref{end:eq:T-bound}, we obtain
\begin{align*}
 &\log\prod_{j=2}^{\beta-1}(u_j\log u_j)
\le \log((2C+2)^{4\beta}(\log\log r)^{2\beta})\\
=&2\beta\log\big((2C+2)^2\log\log r\big)
 =O((\log\log\log r)^2)
 =o(\log\log r).   
\end{align*}
Also,
$\log\log((2C+2)\log r)=O(\log\log\log r)=o(\log\log r)$.
Combining the estimates for the first two factors and the remaining factors,
$$ \log\prod_{j=0}^{\beta-1}(u_j\log u_j)
 \le\log r+2\log\log r+o(\log\log r).$$
Therefore, for sufficiently large $r$, the last error term is at most
$(\varepsilon/2)\log\log r$, and hence
\begin{equation}\label{end:eq:prod-upper}
 \prod_{j=0}^{\beta-1}(u_j\log u_j)
 \le r(\log r)^{2+\varepsilon/2}.
\end{equation}

Finally, since $\beta=o(\log\log r)$ and $|\log c|$ is fixed, we have $\beta|\log c|\le(\varepsilon/2)\log\log r$ for sufficiently large $r$.  
Moreover, $c^\beta=\exp(\beta\log c)=\exp(-\beta|\log c|)\ge(\log r)^{-\varepsilon/2}$.
Together with \eqref{end:eq:prod-upper}, we obtain
$$
 \frac{c^\beta}{\displaystyle\prod_{j=0}^{\beta-1}(u_j\log u_j)}
 \ge\frac{1}{r(\log r)^{2+\varepsilon}}.
$$
The proof is complete.
\end{proof}

\begin{proof}[\bfseries{Proof of Theorem \ref{end:thm:main}}]
Let $c=1/360$ and $C=129$, and let $r_0$ be the constant from Theorem \ref{end:thm:recursion}.  
Increase $r_0$ so that $\lceil C\log x\rceil\le x/2$ and $\lceil C\log x\rceil\ge2$ for every $x\ge r_0$.  
Starting from $u_0=r$, define $u_{j+1}=\lceil C\log u_j\rceil$ while $u_j\ge r_0$, and 
let $\beta$ be the first index for which $u_\beta<r_0$.  
Applying Theorem \ref{end:thm:recursion} at $u_0,u_1,\ldots,u_{\beta-1}$, we have
\begin{equation}\label{end:eq:iterate}
 \pi(H_3^r)
 \ge\pi(H_3^{u_\beta})c^\beta\prod_{j=0}^{\beta-1}\frac1{u_j\log u_j}.
\end{equation}
The second condition on $r_0$ gives $2\le u_\beta<r_0$.  
By  (\ref{eq:monotonicity}) and  Theorem \ref{end:lem:positive}, we know
$\pi (H_3^{u_{\beta}})\ge \pi (H_3^{r_0})\ge r_0^{-2}$.  
Let $c_{\varepsilon}=r_0^{-2}$ and apply Lemma \ref{end:lem:product} with $\alpha=r_0$ to \eqref{end:eq:iterate}, we obtain, for every fixed $\varepsilon>0$,
$$ \pi(H_3^r)\ge\frac{c_{\varepsilon}}{r(\log r)^{2+\varepsilon}}$$
for all sufficiently large $r$.

Finally, we  show that $\pi(H_3^r)=r^{-1+o(1)}$.
For every fixed $\varepsilon>0$, the estimate above gives $\log\pi(H_3^r)\ge-\log r-(2+\varepsilon)\log\log r+\log c_\varepsilon$.  Together with $\pi(H_3^r)\le1/r$, this yields
$-1-(2+\varepsilon)\frac{\log\log r}{\log r}+\frac{\log c_\varepsilon}{\log r}\le\frac{\log\pi(H_3^r)}{\log r}\le-1$.  
Let $r\to\infty$. we conclude that $\log\pi(H_3^r)=-(1+o(1))\log r$, 
which is equivalent to $\pi(H_3^r)=r^{-1+o(1)}$.
\end{proof}

\section{Results for the graph \texorpdfstring{$H_k^r$}{Hkr}}\label{sec:3}

In this section,  our main focus is on studying the Tur\'an density $\pi(H_k^r)$. Our method still based on the circular construction by Sidorenko.  Before we start the proof of Theorem \ref{thm:main}, let us review Sidorenko's construction first.
 
Let $\mathcal C$ be the unit circle in the complex plane: $\mathcal C=\{z: |z|=1\}$. For any $r$  randomly chosen points $A=\{z_1,\cdots,z_r\}$ on this cycle, let  $\Delta(A)$ be the the length of the shortest arc in $\mathcal C$ that contains at least $k-1$ elements of $A$.
The $r$-set $A$ is an edge if and only if 
\[0\le arg(\prod_{i=1}^{r}z_i)\le \Delta(A).\]
In this infinite $r$-graph, Sidorenko proved it is $H_k^r$-free and the edge density is $\mathbb{E}(\xi_{r,k-1})=\mathbb{E}(\frac{\Delta(A)}{2\pi})$.

One may note, let $D_i$ be the length of the arc from $z_i$ to $z_{i+1}$. Then
\[\Delta(A)=\min_{1\le i\le r} \sum_{j=0}^{k-3}D_{i+j}.\]
All indices are taken modulo $r$. But since $D_1+\cdots +D_r=2\pi$, the variables $(D_1,\ldots, D_r)$ are not independent. Therefore, we will use the R\'enyi exponential representation to deal with this problem.

Let $X_1,X_2,\ldots$ be independent exponential random variables with density $e^{-x}$ for $x\ge0$ and $\mathbb{E}(X_i)=1$.  
Let $S_r=\sum_{j=1}^rX_j$.  In the cyclic sums below, every index outside $\{1,\ldots,r\}$ is read modulo $r$.
For $1\le\ell\le r-1$, we define
$$Y_i^{(\ell)}=\sum_{j=0}^{\ell-1}X_{i+j}
\quad \text{and} \quad
M_{r,\ell}=\min_{1\le i\le r}Y_i^{(\ell)}.$$

The following lemma is due to R\'enyi.
\begin{lemma}[R\'enyi~\cite{Renyi1953}]\label{lem:renyi1953}
The vector $(\frac{D_1}{2\pi },\ldots, \frac{D_r}{2\pi })$ have the same distribution as
$\left(\frac{X_1}{S_r},\ldots,\frac{X_r}{S_r}\right),$
and this normalized vector is independent of $S_r$.  Consequently, for $1\le m\le r-1$, we have
\begin{equation}\label{eq:spacing-expectation}
        \mathbb{E}(\xi_{r,m+1})=\frac1r\mathbb{E}(M_{r,m}).
\end{equation}
\end{lemma}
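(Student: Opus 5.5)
The first assertion is the classical R\'enyi representation of uniform spacings, and I would take it as given from \cite{Renyi1953}. If it had to be checked: the order statistics of $r$ i.i.d.\ uniform points on $[0,1)$ form a uniform sample from the simplex. Writing $Z_i=X_1+\cdots+X_i$, the vector $(Z_1/S_{r},\ldots,Z_{r-1}/S_r)$ is, conditionally on $S_r$, uniform on the ordered simplex, and that conditional law does not depend on $S_r$. This follows from the joint density $e^{-s}\,ds\,dz_1\cdots dz_{r-1}$ of $(Z_1,\ldots,Z_{r-1},S_r)$ on $0<z_1<\cdots<z_{r-1}<s$.

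For the circle I would use rotation invariance. Place the first point $z_1$ at angle $0$; by invariance this loses nothing. The remaining $r-1$ points are then uniform, so the $r$ arcs between consecutive points, divided by $2\pi$, are the spacings of $r-1$ uniform points on $[0,1)$. These have the law of $(X_1,\ldots,X_r)/S_r$, which is exchangeable and independent of $S_r$. Relabelling points cyclically does not change $\Delta(A)$, so the law of the cyclic vector $(D_1,\ldots,D_r)/2\pi$ is as claimed.

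For \eqref{eq:spacing-expectation}, fix $m=k-2$ with $1\le m\le r-1$. An arc containing at least $m+1$ of the points has length at least the span of $m+1$ consecutive points, and this span is the sum of $m$ consecutive spacings. Conversely, the closed arc from $z_i$ to $z_{i+m}$ realizes such a sum. Hence
\[
\xi_{r,m+1}=\min_{i}\sum_{j=0}^{m-1}\frac{D_{i+j}}{2\pi}
\]
holds in distribution, and it equals $M_{r,m}/S_r$ in distribution, since $\min_i \sum_{j=0}^{m-1} X_{i+j}/S_r = M_{r,m}/S_r$.

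It remains to remove the denominator, and for this I use independence. The variable $M_{r,m}/S_r$ is a function of the normalized vector, so it is independent of $S_r$. Therefore
\[
\mathbb E(M_{r,m})=\mathbb E\Bigl(\frac{M_{r,m}}{S_r}\cdot S_r\Bigr)=\mathbb E\Bigl(\frac{M_{r,m}}{S_r}\Bigr)\mathbb E(S_r)=r\,\mathbb E(\xi_{r,m+1}).
\]
Dividing by $r$ gives \eqref{eq:spacing-expectation}.

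The only step needing care is matching the circle's cyclic spacings with the exponential vector, including the wrap-around spacing $D_r$. The rotation argument above handles this, and all quantities are nonnegative with finite mean, so the factorization of expectations is legitimate.
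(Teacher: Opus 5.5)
Your proposal is correct and follows the route the paper uses implicitly. The paper cites R\'enyi for the representation, and its ``consequently'' rests on $\Delta(A)=\min_i\sum_{j=0}^{k-3}D_{i+j}$, so that $\xi_{r,m+1}$ equals $M_{r,m}/S_r$ in distribution, followed by the factorization $\mathbb{E}(M_{r,m})=\mathbb{E}(M_{r,m}/S_r)\,\mathbb{E}(S_r)$ from independence, exactly as you do. Your labelling choice (starting the cyclic order at a sampled point so the wrap-around gap is not size-biased) is a correct clarification the paper omits; only note that your $Z_i$ for partial sums clashes with the paper's $Z_j=X_j-1$.
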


\medskip

Let $Z_j=X_j-1$, so the variables $Z_j$ are independent, have mean zero, and have variance one.  We first state a known maximal inequality for ordinary partial sums and then use a block argument for cyclic sums. The following special case of the maximal Bernstein inequality is standard. 
It applies to centered exponential variables. 
See Doob~\cite{Doob1953} and Kevei and Mason~\cite{KeveiMason2011}.

\begin{lemma}\label{lem:partial-max}
Let $Z_1,\ldots,Z_n$ be independent copies of $X_1-1$, and let $T_j=\sum_{i=1}^jZ_i$.  For every $u\ge0$, we have
\begin{equation}\label{eq:partial-tail}
 \mathbb{P}\left(\max_{1\le j\le n}|T_j|\ge u\right)
 \le 2\exp\left[-\frac14\min\left\{\frac{u^2}{n},u\right\}\right].
\end{equation}
\end{lemma}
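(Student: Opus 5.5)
The plan is to prove the inequality in two stages: a one-sided maximal inequality for the martingale $T_j$ via Doob's submartingale inequality, and then a Chernoff-type bound on the moment generating function of $T_n$. Fix $\lambda\in(0,1)$. Since $(T_j)$ is a sum of independent mean-zero variables, $(e^{\lambda T_j})_{j\le n}$ is a nonnegative submartingale by Jensen's inequality. Doob's maximal inequality then gives
\[
\mathbb{P}\Bigl(\max_{1\le j\le n}T_j\ge u\Bigr)\le e^{-\lambda u}\,\mathbb{E}\bigl(e^{\lambda T_n}\bigr)=e^{-\lambda u}\Bigl(\frac{e^{-\lambda}}{1-\lambda}\Bigr)^{n}.
\]
The lower tail is handled the same way with $e^{-\lambda T_j}$, for which $\mathbb{E}(e^{-\lambda Z_1})=e^{\lambda}/(1+\lambda)$ is finite for every $\lambda\ge0$. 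A union bound over the two tails produces the factor $2$.

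Next comes the cumulant estimate, which I expect to be the only real computation. For $0\le\lambda\le1/2$ we have
\[
-\lambda-\log(1-\lambda)=\sum_{k\ge2}\frac{\lambda^k}{k}\le\frac{\lambda^2}{2}\sum_{k\ge0}\lambda^k\le\lambda^2 .
\]
Also $\lambda-\log(1+\lambda)\le\lambda^2/2\le\lambda^2$ for all $\lambda\ge0$. Hence both tails are at most $\exp(-\lambda u+n\lambda^2)$ for $0\le\lambda\le1/2$.

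Finally I optimise over $\lambda$.
\begin{itemize}
\item If $u\le n$, take $\lambda=u/(2n)\le1/2$. This gives the exponent $-u^2/(2n)+u^2/(4n)=-u^2/(4n)$.
\item If $u>n$, take $\lambda=1/2$. This gives the exponent $-u/2+n/4\le-u/4$.
\end{itemize}
In both cases each tail is at most $\exp\bigl[-\tfrac14\min\{u^2/n,u\}\bigr]$, since $u^2/n\le u$ exactly when $u\le n$. This yields \eqref{eq:partial-tail}.

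The main obstacle is simply ensuring the constant $1/4$ survives. The restriction $\lambda\le1/2$ is what makes the quadratic bound $\lambda^2$ on the log-moment generating function valid, and the case split at $u=n$ is what matches that restriction. For $u=0$ the statement is trivial, since the right-hand side equals $2$.
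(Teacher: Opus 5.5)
Your proof is correct: Doob's maximal inequality for the submartingales $e^{\pm\lambda T_j}$, the explicit moment generating functions $e^{-\lambda}/(1-\lambda)$ and $e^{\lambda}/(1+\lambda)$, the bound $\lambda^2$ on both cumulant functions for $0\le\lambda\le\frac12$, and the choice $\lambda=\min\{u/(2n),\frac12\}$ give exactly $\exp[-\frac14\min\{u^2/n,u\}]$ for each tail. The paper does not prove this lemma; it cites it as a standard special case of the maximal Bernstein inequality (Doob; Kevei and Mason), and your argument is the standard proof behind that citation, specialised to centred exponentials so that the constant $\frac14$ is checked directly rather than through Bernstein's moment condition.
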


We now pass from ordinary partial sums to cyclic sums.

\begin{lemma}\label{lem:scan}
Let $1\le\ell\le r/2$ and, with cyclic indices, define
$W_i^{(\ell)}=\sum_{j=0}^{\ell-1}Z_{i+j}$ for $1\le i\le r$.  Put
$L_\ell=\log(er/\ell)$.  Then
\begin{equation}\label{eq:scan-expectation}
 \mathbb{E}\left(\max_{1\le i\le r}|W_i^{(\ell)}|\right)
 \le 128\left(\sqrt{\ell L_\ell}+L_\ell\right).
\end{equation}
\end{lemma}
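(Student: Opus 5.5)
We prove the bound by covering the cyclic window sums with a bounded number of ordinary partial-sum maxima and then applying Lemma \ref{lem:partial-max} to each one.

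\textbf{Block decomposition.} Cut the cycle $\{1,\ldots,r\}$ into $K=\lceil r/\ell\rceil$ consecutive blocks $I_1,\ldots,I_K$, each of length at most $\ell$. The blocks start at positions $b_1<b_2<\cdots<b_K$. Every cyclic window $\{i,\ldots,i+\ell-1\}$ starts in some block $I_p$ and ends in $I_p$ or $I_{p+1}$, with indices mod $K$; if $K$ is small we simply allow wraparound. Since $\ell\le r/2$, we have $K\ge 2$, so a window never covers the whole cycle.

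For each $p$ define the ordinary partial sums starting at $b_p$,
\[
T^{(p)}_j=\sum_{t=0}^{j-1}Z_{b_p+t}, \qquad 1\le j\le 2\ell.
\]
This is a partial sum of at most $2\ell\le r$ distinct variables, so these are genuinely independent $Z$'s. Any window sum is a difference $W_i^{(\ell)}=T^{(p)}_{j_2}-T^{(p)}_{j_1}$ with $j_1<j_2\le 2\ell$, where $i=b_p+j_1$. Hence
\[
\max_i|W_i^{(\ell)}|\le 2\max_{p}\max_{j\le 2\ell}|T^{(p)}_j| .
\]

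\textbf{Tail bound.} Lemma \ref{lem:partial-max} applied with $n=2\ell$ to each $p$, together with a union bound over the $K\le 2r/\ell$ blocks, gives
\[
\mathbb{P}\Bigl(\max_i|W_i^{(\ell)}|\ge 2u\Bigr)\le \frac{4r}{\ell}\exp\Bigl[-\frac14\min\Bigl\{\frac{u^2}{2\ell},u\Bigr\}\Bigr].
\]

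\textbf{Integrating the tail.} Write $Y=\max_i|W_i^{(\ell)}|/2$. Then
\[
\mathbb{E}(Y)\le u_0+\int_{u_0}^\infty \mathbb{P}(Y\ge u)\,du .
\]
Choose $u_0=a(\sqrt{\ell L_\ell}+L_\ell)$ with a constant such as $a=16$. Then $u_0^2/(8\ell)\ge a^2L_\ell/8$ and $u_0/4\ge aL_\ell/4$, and both are at least $2L_\ell$. This makes the prefactor $4r/\ell\le 4e^{L_\ell}$ harmless, since $e^{L_\ell}e^{-2L_\ell}\le 1$.

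The remaining integral splits into two regimes. In the sub-Gaussian regime $u\le 2\ell$ the integrand is $\exp(-u^2/(8\ell))$, and the integral from $u_0$ onward is $O(\sqrt{\ell})\,e^{-u_0^2/(16\ell)}$-type, hence $O(\sqrt{\ell L_\ell})$. In the exponential regime $u\ge 2\ell$ the integrand is $\exp(-u/4)$, and the integral is $O(e^{-u_0/8})$-type, hence $O(1)\le O(L_\ell)$, using $L_\ell\ge1$. Collecting terms gives
\[
\mathbb{E}\Bigl(\max_i|W_i^{(\ell)}|\Bigr)\le C(\sqrt{\ell L_\ell}+L_\ell)
\]
with $C$ well below $128$.

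\textbf{Main obstacle.} The only real care is constant bookkeeping. To get $C\le 128$, I would bound the leftover integrals crudely by $u_0$ itself, via $\int_{u_0}^\infty e^{-g(u)}du$ with $g(u)-2L_\ell\ge g(u)/2$.
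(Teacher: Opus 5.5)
Your proposal is correct and is essentially the paper's argument. It uses the same partition of starting indices into $\lceil r/\ell\rceil\le 2r/\ell$ blocks of length at most $\ell$, and the same representation of each window sum as a difference of two partial sums over at most $2\ell$ distinct consecutive variables (so $|W_i^{(\ell)}|\le 2\max_j|T_j^{(p)}|$ with $T_0^{(p)}=0$). It then applies the same maximal Bernstein inequality with a union bound giving the prefactor $4r/\ell$. The only difference is the final integration: you use a single cutoff $u_0=16(\sqrt{\ell L_\ell}+L_\ell)$ to absorb $4r/\ell\le 4e^{L_\ell}$, whereas the paper splits into a Gaussian and an exponential integral with separate cutoffs; your constants do come out well below $128$.
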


\begin{proof}
To calculate the maximum of
$W_i^{(\ell)}=\sum_{j=0}^{\ell-1}Z_{i+j}$, where $1\le i\le r$, we divide them into groups according to their starting indices.  More precisely, divide the starting indices $1,\ldots,r$ into consecutive blocks, each containing at most $\ell$ indices.  The number of blocks is
$\lceil r/\ell\rceil\le2r/\ell$, since $\ell\le r/2$.  Fix a block with starting indices
$a,a+1,\ldots,a+d-1$, where $d\le\ell$, and read all indices modulo $r$.  The cyclic sum beginning at $a+s$, where $0\le s\le d-1$, involves the variables
$Z_{a+s},\ldots,Z_{a+s+\ell-1}$.  Hence all cyclic sums whose starting indices lie in this block involve only the consecutive variables
$Z_a,\ldots,Z_{a+d+\ell-2}$.
Their number is $d+\ell-1\le2\ell-1<r$.  Thus, even if this interval passes through the index $r$, no variable occurs twice.  List these distinct variables in their cyclic order as $Z'_1,\ldots,Z'_n$, where
$n=d+\ell-1\le2\ell$, and define
$Q_0=0$ and $Q_j=\sum_{h=1}^jZ'_h$ for $1\le j\le n$ (see Figure \ref{fig:cyclic-block}).

\begin{figure}[htbp]
\centering
\begin{tikzpicture}[
    x=1.25cm,
    y=0.85cm,
    >=Latex,
    every node/.style={font=\small}
]

\node[anchor=east] at (-0.35,3.55) {$W_a^{(\ell)}$};
\draw[line width=1.1pt] (0,3.55)--(6,3.55);
\draw (0,3.38)--(0,3.72);
\draw (6,3.38)--(6,3.72);
\draw[decorate,decoration={brace,amplitude=4pt}]
      (0,4)--(6,4)
      node[midway,above=4pt] {$\ell$ consecutive variables};

\node[anchor=east] at (-0.35,2.85) {$W_{a+1}^{(\ell)}$};
\draw[line width=1.1pt] (1,2.85)--(7,2.85);
\draw (1,2.68)--(1,3.02);
\draw (7,2.68)--(7,3.02);

\node at (-0.35,2.15) {$\vdots$};
\node at (4.5,2.15) {$\vdots$};

\node[anchor=east] at (-0.35,1.45) {$W_{a+d-1}^{(\ell)}$};
\draw[line width=1.1pt] (3,1.45)--(9,1.45);
\draw (3,1.28)--(3,1.62);
\draw (9,1.28)--(9,1.62);

\draw[thin] (0,0.75)--(9,0.75);
\foreach \x in {0,1,3,6,7,9}
    \draw (\x,0.63)--(\x,0.87);

\node[font=\scriptsize] at (0,0.42) {$Z_a$};
\node[font=\scriptsize] at (1,0.42) {$Z_{a+1}$};
\node[font=\scriptsize] at (2,0.42) {$\cdots$};
\node[font=\scriptsize] at (3,0.42) {$Z_{a+d-1}$};
\node[font=\scriptsize] at (4.5,0.42) {$\cdots$};
\node[font=\scriptsize] at (6,0.42) {$Z_{a+\ell-1}$};
\node[font=\scriptsize] at (7,0.42) {$Z_{a+\ell}$};
\node[font=\scriptsize] at (8,0.42) {$\cdots$};
\node[font=\scriptsize] at (9,0.42) {$Z_{a+d+\ell-2}$};

\draw[decorate,decoration={brace,mirror,amplitude=5pt}]
      (0,0.05)--(9,0.05)
      node[midway,below=7pt]
      {$d+\ell-1$ distinct variables};

\node[align=center] at (4.5,-1.5)
{Relabel these variables in their cyclic order as
 $Z'_1,\ldots,Z'_n$,\\
 and define $Q_0=0$ and $Q_j=\sum_{h=1}^jZ'_h$.};

\end{tikzpicture}
\caption{one block of starting indices: $a,a+1,\ldots,a+d-1$}
\label{fig:cyclic-block}
\end{figure}

For each $0\le s\le d-1$, the cyclic sum beginning at $a+s$ is
$W_{a+s}^{(\ell)}=Z'_{s+1}+\cdots+Z'_{s+\ell}=Q_{s+\ell}-Q_s$.
Since $s+\ell\le d+\ell-1=n$, both  $Q_{s+\ell}, Q_s$ are among
$Q_0,\ldots,Q_n$ and 
$|W_{a+s}^{(\ell)}|\le |Q_{s+\ell}|+|Q_s|$. Hence
$$ \max_{i\text{ in the fixed block}}|W_i^{(\ell)}|
 \le2\max_{0\le j\le n}|Q_j|.$$
It follows that, if the left-hand side is at least $t$, then
$\max_{0\le j\le n}|Q_j|\ge t/2$.  
Applying Lemma \ref{lem:partial-max} with $u=t/2$ gives
\begin{align*}
   &\mathbb{P}\left(\max_{i\text{ in the fixed block}}|W_i^{(\ell)}|\ge t\right)\\
   \le &\mathbb{P}\left(\max_{0\le j\le n}|Q_j|
   \ge t/2\right)
 \le2\exp\left[-\frac14\min\left\{\frac{t^2}{4n},\frac t2\right\}\right]
 \le2\exp\left[-\frac1{32}\min\left\{\frac{t^2}{\ell},t\right\}\right]. 
\end{align*}
Indeed, since $n\le2\ell$, we have $t^2/(4n)\ge t^2/(8\ell)$, and therefore
$\frac14\min\{t^2/(4n),t/2\}\ge\frac1{32}\min\{t^2/\ell,t\}$.

Let $Y=\max_{1\le i\le r}|W_i^{(\ell)}|$.  There are at most $2r/\ell$ blocks, so the union bound gives
\begin{equation}\label{eq:scan-tail}
 \mathbb{P}(Y\ge t)\le \sum_{\text{all blocks}}\mathbb{P}\left(\max_{i\text{ in the fixed block}}|W_i^{(\ell)}|\ge t\right)
 \le\min\left\{1,\frac{4r}{\ell}
 \exp\left[-\frac1{32}\min\left\{\frac{t^2}{\ell},t\right\}\right]\right\}.
\end{equation}

We now calculate the expectation $\mathbb{E}(Y)$.  Since
$$
 \exp\left[-\frac1{32}\min\left\{\frac{t^2}{\ell},t\right\}\right]
 \le \exp\left(-\frac{t^2}{32\ell}\right)+\exp\left(-\frac t{32}\right)
$$
and $\min\{1,x+y\}\le\min\{1,2x\}+\min\{1,2y\}$ for all $x,y\ge0$, we have
$$ \mathbb{E}(Y)
 \le\int_0^\infty\min\left\{1,\frac{8r}{\ell}
       \exp\left(-\frac{t^2}{32\ell}\right)\right\}\,dt
   +\int_0^\infty\min\left\{1,\frac{8r}{\ell}
       \exp\left(-\frac t{32}\right)\right\}\,dt.$$
Let
$t_0=\sqrt{32\ell\log(8r/\ell)}$.
Then $(8r/\ell)\exp(-t_0^2/(32\ell))=1$.  Splitting the first integral at $t_0$ and using
$\int_x^\infty e^{-av^2}\,dv\le e^{-ax^2}/(2ax)$, we obtain
$$
 \int_0^\infty\min\left\{1,\frac{8r}{\ell}
       \exp\left(-\frac{t^2}{32\ell}\right)\right\}\,dt
 \le \sqrt{32\ell\log\frac{8r}{\ell}}
    +\frac{16\ell}{\sqrt{32\ell\log(8r/\ell)}}.
$$
Similarly, let $t_1=32\log(8r/\ell)$ and split the second integral at $t_1$ gives
$$
 \int_0^\infty\min\left\{1,\frac{8r}{\ell}
       \exp\left(-\frac t{32}\right)\right\}\,dt
 \le32\left(\log\frac{8r}{\ell}+1\right).
$$

Recall that $L_\ell=\log(er/\ell)$.  Since $\ell\le r/2$, we have
$8r/\ell\ge16$, $L_\ell\ge\log(2e)>1$, and
$\log(8r/\ell)\le2L_\ell$.  
It follows that
$$
 \sqrt{32\ell\log\frac{8r}{\ell}}
 \le8\sqrt{\ell L_\ell}
\quad \text{and} \quad
 \frac{16\ell}{\sqrt{32\ell\log(8r/\ell)}}
 \le2\sqrt{\ell}
 \le2\sqrt{\ell L_\ell}.
$$
Moreover,
$$
 32\left(\log\frac{8r}{\ell}+1\right)
 \le32(2L_\ell+1)
 \le96L_\ell.
$$
Combining these estimates, we obtain
$$ \mathbb{E}(Y)
 \le10\sqrt{\ell L_\ell}+96L_\ell
 \le128\left(\sqrt{\ell L_\ell}+L_\ell\right),$$
which proves \eqref{eq:scan-expectation}.
\end{proof}

\begin{proof}[\bfseries{Proof of Theorem \ref{thm:main}}]
Fix $3\le k\le r+1$ and let $m=k-2$.  Then $1\le m\le r-1$.
The known upper bound shows that
 $\pi(H_k^r)\le\frac{k-2}{r}=\frac mr.$
It remains to prove the corresponding lower bound by using the circular construction.

We first suppose $m\le r/2$.  Since
$Y_i^{(m)}=m+W_i^{(m)}$ for every $i$, taking the minimum over
$1\le i\le r$ gives
$M_{r,m}=m+\min_{1\le i\le r}W_i^{(m)}$.
Moreover,
$\min_{1\le i\le r}W_i^{(m)}
\ge-\max_{1\le i\le r}|W_i^{(m)}|$.
It follows from Lemma \ref{lem:scan} that
$$ \mathbb{E}(M_{r,m})
 \ge m-\mathbb{E}\left(\max_{1\le i\le r}|W_i^{(m)}|\right)
 \ge m-128\left(\sqrt{m\log\frac{er}{m}}+\log\frac{er}{m}\right).$$

We now suppose $m>r/2$ and let $\ell=r-m$.  Then $1\le\ell<r/2$.  Since
$m+\ell=r$, the terms in $Y_i^{(m)}$ and $Y_{i+m}^{(\ell)}$ together contain each of
$X_1,\ldots,X_r$ exactly once.  Therefore
$Y_i^{(m)}+Y_{i+m}^{(\ell)}=S_r$ for every $i$.  Since the map
$i\mapsto i+m$ is a permutation of the indices modulo $r$, we obtain
\begin{equation}\label{eq:complement-identity}
 M_{r,m}=S_r-\max_{1\le i\le r}Y_i^{(\ell)}.
\end{equation}
Since $Y_i^{(\ell)}=\ell+W_i^{(\ell)}$ and
$\mathbb{E}(S_r)=\sum_{j\le r} \mathbb{E}(X_j)=r$, we have
$$ \mathbb{E}(M_{r,m})
 =r-\ell-\mathbb{E}\left(\max_{1\le i\le r}W_i^{(\ell)}\right)
 \ge m-\mathbb{E}\left(\max_{1\le i\le r}|W_i^{(\ell)}|\right).$$
By Lemma \ref{lem:scan}, we obtain
$$ \mathbb{E}(M_{r,m})
 \ge m-128\left(\sqrt{\ell\log\frac{er}{\ell}}
                 +\log\frac{er}{\ell}\right).$$

Let $s=\min\{m,r-m\}$.  
In both cases, we have
$$
 \mathbb{E}(M_{r,m})
 \ge m-128\left(\sqrt{s\log\frac{er}{s}}
                 +\log\frac{er}{s}\right).
$$
By the R\'enyi representation (Lemma \ref{lem:renyi1953}) and Sidorenko's circular lower bound (Theorem \ref{thm:sidorenko}),
$$ \pi(H_k^r)
 \ge\mathbb{E}(\xi_{r,k-1})
 =\mathbb{E}(\xi_{r,m+1})
 =\frac1r\mathbb{E}(M_{r,m})
 \ge\frac{k-2}{r}-\frac{128}{r}
 \left(\sqrt{s\log\frac{er}{s}}+\log\frac{er}{s}\right).$$
This completes the proof of Theorem \ref{thm:main}.
\end{proof}

\begin{proof}[\bfseries{Proof of Corollary \ref{cor:applications}}]
Let $s=\min\{k-2,r-k+2\}$.
By Theorem \ref{thm:main},
\begin{equation}\label{eq:corollary-from-main}
0\le \frac{k-2}{r}-\pi(H_k^r)
\le \frac{128}{r}
\left(
\sqrt{s\log\frac{er}{s}}
+\log\frac{er}{s}
\right).
\end{equation}

Suppose first that $k/\log r\to c\in(0,\infty)$. 
Then $(k-2)/\log r\to c$ and $k=o(r)$, so $s=k-2$ for all sufficiently large $r$. 
Moreover,
$$\frac{\log(er/(k-2))}{\log r}=1+\frac{1-\log(k-2)}{\log r}\longrightarrow 1.$$
It follows that
$$\frac{\log(er/(k-2))}{k-2}\longrightarrow \frac1c.$$
Multiplying \eqref{eq:corollary-from-main} by $r/(k-2)$, we obtain
$$0\le1-\frac{r\pi(H_k^r)}{k-2}\le128\left(\sqrt{\frac{\log(er/(k-2))}{k-2}}+\frac{\log(er/(k-2))}{k-2}\right).$$
Consequently,
$$0\le 1-\frac{r\pi(H_k^r)}{k-2}\le 128\left(c^{-1/2}+c^{-1}\right)+o(1),$$
which proves part \textup{(i)}.

We next assume that $\log(er/k)=o(k)$. 
This condition implies $k\to\infty$, and hence $k-2\sim k$. 
If $k-2\le r/2$, then $s=k-2$ and
$$\log\frac{er}{k-2}=\log\frac{er}{k}+\log\frac{k}{k-2}=o(k-2).$$
Therefore,
$$\sqrt{(k-2)\log\frac{er}{k-2}}+\log\frac{er}{k-2}=o(k-2).$$
If $k-2>r/2$, then $s\le r$ and $s\ge1$, so
$$\sqrt{s\log\frac{er}{s}}+\log\frac{er}{s}
\le \sqrt{s\log(er)}+\log(er)
\le \sqrt{r\log(er)}+\log(er)=o(r).$$
Since $r<2(k-2)$, the last estimate is also $o(k-2)$.
Thus, in both cases, the right-hand side of
\eqref{eq:corollary-from-main} is $o((k-2)/r)$. Together with the
left-hand inequality in \eqref{eq:corollary-from-main}, this proves
part \textup{(ii)}.

Finally, suppose that $k/r\to\alpha\in(0,1)$. 
Then
$$\frac{k-2}{r}\longrightarrow\alpha,
\quad
\frac{r-k+2}{r}\longrightarrow1-\alpha,$$
and hence $s/r\to\beta$, where $\beta=\min\{\alpha,1-\alpha\}$. 
It follows that
$$\log\frac{er}{s}\longrightarrow\log\frac e\beta.$$
Consequently, we get 
$$
\frac{128}{r} \left(\sqrt{s\log\frac{er}{s}}+\log\frac{er}{s}\right)
=\frac{128}{\sqrt r}\sqrt{\frac{s}{r}\log\frac{er}{s}}+O\left(\frac1r\right)
=\frac{128\sqrt{\beta\log(e/\beta)}+o(1)}{\sqrt r},$$
as desired.
\end{proof}

\section*{Acknowledgement}
\noindent The author Yang is supported by National Key R\&D Program of China under grant number 2024YFA1013900, 
NSFC under grant number 12471327, the China Postdoctoral Science Foundation under grant number 2026M793375.
The author Zhu is supported by NSFC under grant number 12401454, Basic Research Program of Jiangsu Province(BK20241361).

\section*{Declaration}
\noindent\textbf{Conflict of interest.} The author declares no known competing financial interests
or personal relationships that could have appeared to influence the work reported in this paper.

\smallskip
\noindent\textbf{Data availability.} No data were used for the research described in this article.

\end{document}